\theoremstyle{plain}
\newtheorem{thm}{Theorem}[section]
\newtheorem{lem}[thm]{Lemma}
\newtheorem{cor}[thm]{Corollary}
\newtheorem{prop}[thm]{Proposition}
\newtheorem{deflem}[thm]{Definition/Lemma}
\theoremstyle{definition}
\newtheorem{rem2}[thm]{Remark}
\newtheorem*{note}{Note}
\newtheorem{defn}[thm]{Definition}
\newtheorem{eg}[thm]{Example}
\newcommand{\bC}{\mathbb{C}}
\newcommand{\bQ}{\mathbb{Q}}
\newcommand{\bZ}{\mathbb{Z}}
\newcommand{\fx}{\mathfrak{x}}
\newcommand{\scD}{\mathscr{D}}
\newcommand{\scI}{\mathscr{I}}
\newcommand{\scM}{\mathscr{M}}
\newcommand{\scO}{\mathscr{O}}
\begin{document}

\title{Hodge filtrations for twisted parametrically prime divisors}

\author{Henry Dakin}

\date{}

\maketitle

\begin{abstract}
We extend previous work to the case of $\bQ$-divisors. Namely, for certain parametrically prime holomorphic functions $f$ and $\alpha \geq 0$, we obtain an explicit expression for the Hodge filtration on $\scM(f^{-\alpha}):=\scO_X(*f)f^{-\alpha}$.
\end{abstract}

\tableofcontents

\section{Introduction}\label{intro}

This article extends the main results of \cite{BD24}. We generalise the main results of this paper to the case of $\bQ$-divisors. All of the proofs are completely analogous to those in \cite{BD24}.

Namely, we are interested in calculating the canonical Hodge filtration $F_{\bullet}^H$ associated to the left $\scD_X$-module $\scM(f^{-\alpha}):=\scO_X(*f)f^{-\alpha}$, where $X$ is a complex manifold, $\alpha\in\bQ_{\geq 0}$ and $f\in\scO_X$ is a non-invertible non-zero reduced global section. This Hodge filtration is the one induced by the natural complex mixed Hodge module structure overlying $\scM(f^{-\alpha})$. In this article we assume further that $f$ is parametrically prime (see \cite{BD24}, Definition 4.14) and Euler homogeneous (\cite{BD24}, Definition 4.15), and that the set of roots $\rho_{f,\hspace{0.7pt}\fx}$ of the local Bernstein-Sato polynomial $b_{f,\hspace{0.7pt}\fx}(s)$ of $f$ at $\fx$ satisfies $\rho_{f,\hspace{0.7pt}\fx}\subseteq(-2-\alpha,-\alpha)$, for any $\fx\in X$. We then generalise the results of \cite{BD24}, which covers the case $\alpha =0$. The main theorem is the following.

\begin{thm}

Let $X$ be a complex manifold, $\fx\in X$ and $\alpha \in \bQ_{\geq 0}$. Let $f\in\scO_X$ be reduced, non-invertible, non-zero. Assume that $f$ is of Euler homogeneous and parametrically prime at $\fx$ and that $\rho_{f,\hspace{0.7pt}\fx}\subseteq (-2-\alpha,-\alpha)$. Then 
\[F_k^H\scM(f^{-\alpha})_{\fx}=\phi_{-\alpha}(\Gamma_{f,-\alpha,\hspace{0.7pt}\fx}\cap F_k^{\sharp}\scD_{X,\hspace{0.7pt}\fx}[s])\cdot f_{\fx}^{-1-\alpha},\]
where
\[\Gamma_{f,-\alpha,\hspace{0.7pt}\fx}:=\scD_{X,\hspace{0.7pt}\fx}[s]f_{\fx} +\scD_{X,\hspace{0.7pt}\fx}[s]\beta_{f,-\alpha,\hspace{0.7pt}\fx}(-s)+ \text{\emph{ann}}_{\scD_{X,\hspace{0.7pt}\fx}[s]}f_{\fx}^{s-1}\subseteq \scD_{X,\hspace{0.7pt}\fx}[s]\]
and
\[\phi_{-\alpha}:\scD_{X,\hspace{0.7pt}\fx}[s]\to \scD_{X,\hspace{0.7pt}\fx}\, ; \, P(s) \mapsto P(-\alpha)\]
and 
\[\beta_{f,-\alpha,\hspace{0.7pt}\fx}(s):=\prod_{\lambda \in \rho_{f,\hspace{0.7pt}\fx}\cap (-\alpha-1,-\alpha)}(s+\lambda+1)^{l_{\lambda}},\]
$l_{\lambda}$ being the multiplicity of $\lambda$ as a root of $b_{f,\hspace{0.7pt}\fx}(s)$.

\label{thmmain}

\end{thm}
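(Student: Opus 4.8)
The plan is to follow the strategy of \cite{BD24} for the case $\alpha=0$, adapting each step to the $\bQ$-divisor setting. The key object is the $\scD_{X,\hspace{0.7pt}\fx}[s]$-module generated by $f_{\fx}^{s-1}$ (a twist by $f^{-1}$ is built in, so that specialising $s\mapsto -\alpha$ lands in $\scM(f^{-\alpha})$ via the section $f_{\fx}^{-1-\alpha}$). First I would recall the microlocal/$V$-filtration description of the Hodge filtration: since $f$ is Euler homogeneous, the mixed Hodge module structure on $\scM(f^{-\alpha})$ is controlled by the $V$-filtration along the graph embedding $X\hookrightarrow X\times\bC$, and $F_k^H\scM(f^{-\alpha})_{\fx}$ is the image under a suitable projection of $F_k$ applied to $V^{>-1}$ (or the appropriate graded piece) of the $\scD$-module $\scD_{X\times\bC}\cdot \delta(t-f)\otimes f^{-\alpha}$. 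The hypothesis $\rho_{f,\hspace{0.7pt}\fx}\subseteq(-2-\alpha,-\alpha)$ is precisely what guarantees that the relevant $b$-function roots lie in a single "window," so that the $V$-filtration step can be read off from $\beta_{f,-\alpha,\hspace{0.7pt}\fx}$ and no higher-order corrections intervene.

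Concretely, I would proceed in the following steps. (1) Use parametric primality to identify the annihilator of $f_{\fx}^{s-1}$ in $\scD_{X,\hspace{0.7pt}\fx}[s]$ and to show that $\Gamma_{f,-\alpha,\hspace{0.7pt}\fx}$ is exactly the preimage, under the natural map, of the piece of the $V$-filtration relevant to filtration level — i.e. $\beta_{f,-\alpha,\hspace{0.7pt}\fx}(-s)\in\Gamma$ encodes "$b$-function acts nilpotently modulo lower $V$," while $\scD f_{\fx}$ accounts for the shift by one in the exponent. (2) Establish a good filtration statement: $F_k^{\sharp}\scD_{X,\hspace{0.7pt}\fx}[s]$ (the order filtration, with $s$ placed in degree one, matching the normalisation of \cite{BD24}) intersected with $\Gamma_{f,-\alpha,\hspace{0.7pt}\fx}$ gives, after applying $\phi_{-\alpha}$ and multiplying by $f_{\fx}^{-1-\alpha}$, a filtration on $\scM(f^{-\alpha})_{\fx}$ that is contained in $F_k^H$. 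This direction is the "easy" inclusion and follows from the fact that $f_{\fx}^{-1-\alpha}$ generates $\scM(f^{-\alpha})$ and the order filtration maps into the Hodge filtration by strictness/compatibility of $F_{\bullet}$ with the $V$-filtration. (3) For the reverse inclusion, invoke the characterisation of the Hodge filtration on a localisation (cf. Saito's results, as used in \cite{BD24}) together with Euler homogeneity, which lets one replace abstract $V$-filtration computations by the concrete $b$-function data; the condition on $\rho_{f,\hspace{0.7pt}\fx}$ ensures there is exactly one "jump" to control and that it is governed by $\beta_{f,-\alpha,\hspace{0.7pt}\fx}$.

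The main obstacle I anticipate is step (3), the reverse inclusion $F_k^H\scM(f^{-\alpha})_{\fx}\subseteq\phi_{-\alpha}(\Gamma_{f,-\alpha,\hspace{0.7pt}\fx}\cap F_k^{\sharp}\scD_{X,\hspace{0.7pt}\fx}[s])\cdot f_{\fx}^{-1-\alpha}$: one must show that every Hodge section of order $\le k$ can be lifted to an element of $\scD_{X,\hspace{0.7pt}\fx}[s]$ of $\sharp$-order $\le k$ lying in $\Gamma_{f,-\alpha,\hspace{0.7pt}\fx}$. This requires a careful compatibility argument between the order filtration on the $s$-side and the Hodge filtration on $\scM(f^{-\alpha})$, passing through the $V$-filtration, and it is here that parametric primality (which controls the associated graded of the $V$-filtration and prevents spurious generators) and the bound $\rho_{f,\hspace{0.7pt}\fx}\subseteq(-2-\alpha,-\alpha)$ (which kills contributions from roots outside the window, so that $\beta_{f,-\alpha,\hspace{0.7pt}\fx}(-s)$ suffices and the argument does not need to iterate) are both essential. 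Since the author states that all proofs are "completely analogous" to \cite{BD24}, I would in practice set up a dictionary $0\leftrightarrow-\alpha$, $f^s\leftrightarrow f^{s-1}$ (equivalently $f^s\otimes f^{-1-\alpha}$), $b_f\leftrightarrow$ the shifted $b$-function, and then check that each lemma of \cite{BD24} survives the translation, with the $\bQ_{\geq0}$-value of $\alpha$ entering only through the specialisation map $\phi_{-\alpha}$ and the window hypothesis.
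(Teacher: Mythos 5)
Your outline does track the architecture of the paper's proof: reduce to the $V$-filtration on $i_{f,+}\scM(f^{-\alpha})$ along the graph embedding, use the window hypothesis $\rho_{f,\hspace{0.7pt}\fx}\subseteq(-2-\alpha,-\alpha)$ to express $V^0$ via $\beta_{f,-\alpha,\hspace{0.7pt}\fx}$, and use parametric primality plus Euler homogeneity for filtration compatibility. But as written there are two genuine problems. First, your justification of the ``easy'' inclusion in step (2) is wrong: it is not true that the order filtration $F_k^{\text{ord}}\scM(f^{-\alpha})_{\fx}=F_k\scD_{X,\hspace{0.7pt}\fx}\cdot f_{\fx}^{-1-\alpha}$ is contained in $F_k^H$. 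Already for $k=0$ one has $F_0^{\text{ord}}=\scO_{X,\hspace{0.7pt}\fx}\cdot f_{\fx}^{-1-\alpha}$ while $F_0^H=(\Gamma_{f,-\alpha,\hspace{0.7pt}\fx}\cap\scO_{X,\hspace{0.7pt}\fx})\cdot f_{\fx}^{-1-\alpha}$ is in general strictly smaller (the paper proves the \emph{opposite} containment $F_k^H\subseteq F_k^{\text{ord}}$). The easy direction works only because elements of $\Gamma_{f,-\alpha,\hspace{0.7pt}\fx}\cap F_k^{\sharp}\scD_{X,\hspace{0.7pt}\fx}[s]$ land, under $P(s)\mapsto P(s)\cdot f_{\fx}^{-1-\alpha}$, inside $F_k^{t\text{-ord}}V^0i_{f,+}\scM(f^{-\alpha})_{(\fx,0)}$ — membership in $V^0$, supplied by $\Gamma$, is essential, and one then applies the Musta\c{t}\u{a}--Popa formula $F_k^H=\psi_0(F_k^{t\text{-ord}}V^0)$. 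The order bound alone proves nothing. Second, you attribute to parametric primality the identification of $\text{ann}_{\scD_{X,\hspace{0.7pt}\fx}[s]}f_{\fx}^{s-1}$ and of $\Gamma$ as a preimage of $V^0$; in fact the annihilator computation uses Euler homogeneity (together with Kashiwara's result on $\text{ann}\,f^{-1-\alpha}$), and the identification $\pi_{f,-\alpha}(\widetilde{\Gamma}_{f,-\alpha,\hspace{0.7pt}\fx})=V^0$ uses only the root-window hypothesis.

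More importantly, the two load-bearing steps are named but not carried out, and neither is routine. (i) You must actually prove the $V$-filtration formula
\[V^ki_{f,+}\scM(f^{-\alpha})_{(\fx,0)}=V^{k+1}_{\text{ind}}+\beta_{f,-\alpha,\hspace{0.7pt}\fx}(\partial_tt-k+\alpha)\cdot V^k_{\text{ind}},\]
which requires computing that the Bernstein--Sato polynomial of the induced filtration $V^{\bullet}_{\text{ind}}=V^{\bullet}\scD_{X\times\bC}\cdot f_{\fx}^{-1-\alpha}$ is $b_{f,\hspace{0.7pt}\fx}(-s-1-\alpha)$, building the candidate filtration, checking its $b$-function has roots in $[0,1)$, and invoking uniqueness of the Kashiwara--Malgrange filtration. (ii) You must prove the compatibility
\[F_l^{\text{ord}}\cap V^k_{\text{ind}}=\bigl(F_l\scD_{X\times\bC,(\fx,0)}\cap V^k\scD_{X\times\bC,(\fx,0)}\bigr)\cdot f_{\fx}^{-1-\alpha},\]
which is where parametric primality actually enters, via surjectivity of a symbol map onto $\sigma_l(V^k\scD)\cap\sigma_l(\text{ann}\,f_{\fx}^{-1-\alpha})$ computed from a Gr\"obner basis of $\text{ann}_{\scD_{X,\hspace{0.7pt}\fx}}f_{\fx}^s$ together with $t-f_{\fx}$ and $E+\partial_tt+1+\alpha$; this in turn needs the explicit presentation of $\text{ann}_{\scD_{X\times\bC,(\fx,0)}}f_{\fx}^{-1-\alpha}$. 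One also needs the intermediate identity $F_k^{t\text{-ord}}V^0=F_k^{\text{ord}}V^0$ (via the $t^p$-shift trick and injectivity of $t$). Without these your ``dictionary'' remains an announcement rather than a proof.
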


\begin{note}

Thus giving us expressions for the Hodge ideals associated to the $\bQ$-divisor $D=\alpha H$, $H=\text{div}(f)$ (for $\alpha\neq 0$).

\end{note}

The statement for $k=0$ actually holds under much weaker assumptions.

\begin{thm}

Let $X$ be a complex manifold, $\fx\in X$ and $\alpha \in \bQ_{\geq 0}$. Let $f\in\scO_X$ be reduced, non-invertible, non-zero. Assume that $\rho_{f,\hspace{0.7pt}\fx}\subseteq (-2-\alpha,-\alpha)$. Then
\[F_0^H\scM(f^{-\alpha})_{\fx}=(\Gamma_{f,-\alpha,\hspace{0.7pt}\fx}\cap \scO_{X,\hspace{0.7pt}\fx})\cdot f_{\fx}^{-1-\alpha}.\]
In particular the multiplier ideal sheaf $\scI(f,\alpha-\epsilon)=I_0(\alpha H)$, $0<\epsilon<<1$, is given locally at $\fx$ by $\Gamma_{f,-\alpha,\hspace{0.7pt}\fx}\cap\scO_{X,\hspace{0.7pt}\fx}$.

\label{thmmain2}
    
\end{thm}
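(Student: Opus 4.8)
The plan is to reduce the $k=0$ statement to the main case of \cite{BD24} via a twist, using the microlocal $V$-filtration characterisation of the Hodge filtration on $\scM(f^{-\alpha})$. First I would recall that the Hodge filtration $F_\bullet^H\scM(f^{-\alpha})$ is, by the theory of mixed Hodge modules for (twisted) localisations, computed from the $V$-filtration along the graph embedding $\iota_f\colon X\hookrightarrow X\times\bC_t$: writing $\delta:=\delta(t-f)$ and working with $\scD_{X,\fx}[s]\delta\cdot f^{-\alpha}$ inside the graph-embedded module, one has that $F_0^H$ is the image of $F_0\scD\cdot(\text{lowest piece of the }V\text{-filtration})$. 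Concretely, under the dictionary $s\leftrightarrow -\partial_t t$, the twisted module $\scM(f^{-\alpha})_{\fx}$ is generated by $f_{\fx}^{-1-\alpha}$ as a $\scD_{X,\fx}$-module, and the bijection $P(s)\mapsto P(-\alpha)\cdot f^{s-1}\mapsto P(-\alpha)f^{-1-\alpha}$ identifies $F_0^H\scM(f^{-\alpha})_{\fx}$ with $\phi_{-\alpha}$ applied to an appropriate intersection in $\scD_{X,\fx}[s]$ of the $s$-saturated module with $V^{>-1}$ (equivalently, with the annihilator-plus-$f$ structure captured by $\Gamma_{f,-\alpha,\fx}$).

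The key steps, in order: (1) Identify $\scM(f^{-\alpha})$ with $\scO_X(*f)f^{-\alpha}$ as a filtered $\scD_X$-module and recall from the general formalism (as used in \cite{BD24} for $\alpha=0$) that $F_0^H$ is expressed through $f^{s-1}$ and the $b$-function, the shift by $\alpha$ being absorbed into the specialisation $s\mapsto -\alpha$; (2) Show $\Gamma_{f,-\alpha,\fx}\cap\scO_{X,\fx}$ is exactly the ideal cutting out $F_0^H$: the inclusion $\supseteq$ follows because $\scD[s]f_{\fx}$ and $\mathrm{ann}_{\scD[s]}f_{\fx}^{s-1}$ act trivially/correctly on $f^{-1-\alpha}$ and $\beta_{f,-\alpha,\fx}(-s)$ applied then specialised kills the "bad" roots so that the result lands in $F_0$; (3) For the reverse inclusion $\subseteq$, use the characterisation of $F_0^H$ via the $V$-filtration: if $g\cdot f^{-1-\alpha}\in F_0^H$ with $g\in\scO_{X,\fx}$, lift to $g\cdot f^{s}\in V^{>-1-\alpha}$-type condition, and use that the roots of $b_{f,\fx}$ lie in $(-2-\alpha,-\alpha)$ — hence the only roots that can obstruct membership are those in $(-\alpha-1,-\alpha)$, which is precisely why $\beta_{f,-\alpha,\fx}$ has the stated form — to conclude $g\in\Gamma_{f,-\alpha,\fx}$; (4) Deduce the multiplier ideal statement from Mustaţă–Popa's identification $I_0(\alpha H)=\scI(X,(\alpha-\epsilon)H)$ for $0<\epsilon\ll1$ together with the just-proved formula for $F_0^H$.

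The main obstacle will be step~(3): pushing through the $\subseteq$ inclusion without the Euler-homogeneous and parametrically prime hypotheses that power the higher-$k$ Theorem~\ref{thmmain}. The point is that for $k=0$ one only needs to control membership modulo $V^{>-1}$ (equivalently, the lowest step of the filtration), so the subtle finite-generation / comparison arguments that the general-$k$ proof requires can be bypassed, and the condition $\rho_{f,\fx}\subseteq(-2-\alpha,-\alpha)$ alone suffices to guarantee that $\beta_{f,-\alpha,\fx}(-s)$ is the exact "denominator clearing" operator — but verifying this cleanly requires care with the interplay between the $b$-function roots in $(-\alpha-1,-\alpha)$ versus $[-\alpha,-\alpha)=\emptyset$ and the fact that the graph-embedded generator $f^{s}\delta$ already sits in the right $V$-step once $s$ is specialised to $-\alpha$. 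I expect this to mirror, essentially verbatim, the corresponding argument in \cite{BD24} for $\alpha=0$, with $\scO_{X,\fx}\cdot f^{-1}$ replaced throughout by $\scO_{X,\fx}\cdot f^{-1-\alpha}$ and the relevant root interval shifted by $\alpha$.
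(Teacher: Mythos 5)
Your proposal follows essentially the same route as the paper: express $F_0^H\scM(f^{-\alpha})_{\fx}$ via the $V$-filtration on $i_{f,+}\scM(f^{-\alpha})$ at the graph embedding (Theorem \ref{thmformulaQ}), combine this with the explicit description of $V^0$ in terms of $\beta_{f,-\alpha,\hspace{0.7pt}\fx}$ and $\mathrm{ann}_{\scD_{X,\hspace{0.7pt}\fx}[s]}f_{\fx}^{s-1-\alpha}$ afforded by $\rho_{f,\hspace{0.7pt}\fx}\subseteq(-2-\alpha,-\alpha)$ (Proposition \ref{propVfil}), and note that for $k=0$ no compatibility between the order and $V$-filtrations (hence no Euler homogeneity or parametric primeness) is required. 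The only detail to make precise in your step (3) is that $g\in\Gamma_{f,-\alpha,\hspace{0.7pt}\fx}$ follows because $\pi_{f,-\alpha}$ maps $\widetilde{\Gamma}_{f,-\alpha,\hspace{0.7pt}\fx}$ onto $V^0i_{f,+}\scM(f^{-\alpha})_{(\fx,0)}$ with kernel $\mathrm{ann}_{\scD_{X,\hspace{0.7pt}\fx}[s]}f_{\fx}^{s-1-\alpha}\subseteq\widetilde{\Gamma}_{f,-\alpha,\hspace{0.7pt}\fx}$, so any preimage of $gf_{\fx}^{-1-\alpha}$ lying in $\scO_{X,\hspace{0.7pt}\fx}$ automatically lies in $\widetilde{\Gamma}_{f,-\alpha,\hspace{0.7pt}\fx}\cap\scO_{X,\hspace{0.7pt}\fx}$.
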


\noindent We use a completely analogous proof to that used in \cite{BD24}. Namely, we have an expression for certain $V$-filtration steps on the direct image along the graph embedding $i_{f,+}\scM(f^{-\alpha})_{(\fx,0)}$ under our assumption on $\rho_{f,\hspace{0.7pt}\fx}$, and then under our further assumptions of parametrically prime and Euler homogeneous, we obtain some compatibility between the order filtration and induced $V$-filtration on $\scM(f^{-\alpha})_{\fx}$, which we use to understand how the $t$-order filtration on $i_{f,+}\scM(f^{-\alpha})_{(\fx,0)}$ acts when restricted to the $V$-filtration.

The structure of the paper is as follows. In Section \ref{sectiondefn} we define the main objects and properties used in the rest of the paper. Section \ref{sectionMfalpha} is then concerned with some basic properties of the modules $\scM(f^{-\alpha})$ and the filtrations on these modules. Section \ref{sectionmainthm} deals with the proof of the main theorem.

\section{Definitions} \label{sectiondefn}

Let $X$ be a complex manifold of dimension $n$, $f \in \scO_X$ such that the zero locus $\text{Var}(f)\subset X$ of $f$ defines a hypersurface of $X$ and such that $f$ is reduced. Let $\fx\in X$. Write $f_{\fx}\in\scO_{X,\hspace{0.7pt}\fx}$ for the germ of $f$ at $\fx$.

\begin{defn} \label{def-parametricallyPrime}
    Consider $\text{ann}_{\scD_{X,\hspace{0.7pt}\fx}[s]} f_{\fx}^{s-1}$,  the $\scD_{X,\hspace{0.7pt}\fx}[s]$-annihilator of $f_{\fx}^{s-1}$. We write $F_{\bullet}^{\sharp}$ for the total order filtration on $\scD_{X,\hspace{0.7pt}\fx}[s]$, defined by
    \[F_k^{\sharp}\scD_{X,\hspace{0.7pt}\fx}[s] := \sum_{i\geq 0}F_{k-i}\scD_{X,\hspace{0.7pt}\fx}s^i.\]
    We say that $f$ is \emph{parametrically prime at $\fx \in X$} when $\text{gr}^{F^{\sharp}} (\text{ann}_{\scD_{X,\hspace{0.7pt}\fx}[s]} f_{\fx}^{s-1})$ is a prime $(\text{gr}^{F^{\sharp}} \scD_{X,\hspace{0.7pt}\fx}[s]$)-ideal. 
\end{defn}

\begin{defn} \label{def-EulerHom} \enspace 
    An \emph{Euler vector field} $E$ of $f$ at $\fx$ is a derivation $E \in \text{Der}_{\bC}(\scO_{X,\hspace{1pt}\fx})$ such that $E \cdot f_{\fx} = f_{\fx}$. We say that $f$ is \emph{Euler homogeneous} at $\fx$ if it admits an Euler vector field at $\fx$.
    We say that $f$ is \emph{Euler homogeneous} if it is Euler homogeneous at all $\fx \in \text{Var}(f)$.
\end{defn}

There are many classes of examples satisfying these properties:

\begin{eg}

\begin{enumerate}[label=\roman*)]

\item If $f$ is of linear Jacobian type at $\fx$ (see \cite{BD24}, Definition 5.3), then it is parametrically prime and Euler homogeneous at $\fx$ (by \cite{BD24}, Propositions 5.4 and 5.9, and Proposition 4.17).

\item If $f$ is tame, strongly Euler homogeneous and Saito holonomic at $\fx$, then it is of linear Jacobian type (see \cite{Wal17}).

\item Strongly Koszul free divisors satisfy the conditions in ii), see \cite{Nar15}.

\item Any positively weighted homogeneous locally everywhere divisor is Saito holonomic. Thus for instance all tame hyperplane arrangements satisfy the conditions in ii). And if $n=3$, any positively weighted homogeneous locally everywhere divisor satisfies the conditions in ii).
    
\end{enumerate}
    
\end{eg}

\begin{defn}

We write $b_{f,\hspace{0.7pt}\fx}(s)\in\bC[s]$ for the \emph{local Bernstein-Sato polynomial of $f$ at $\fx$}, defined to be the monic polynomial of minimal degree satisfying
\[b_{f,\hspace{0.7pt}\fx}(s)\cdot f_{\fx}^s \in \scD_{X,\hspace{0.7pt}\fx}[s]\cdot f_{\fx}^{s+1}.\]
We write $\rho_{f,\hspace{0.7pt}\fx}$ for the set of roots of $b_{f,\hspace{0.7pt}\fx}(s)$. Then $-1 \in \rho_{f,\hspace{0.7pt}\fx}$ and $\rho_{f,\hspace{0.7pt}\fx}\subseteq \bQ\cap(-n,0)$.
    
\end{defn}

\section{$\scM(f^{-\alpha})$ and the $V$-filtration} \label{sectionMfalpha}

We investigate how the module $\scM(f^{-\alpha})$ is related to the (loca) Bernstein-Sato polynomial of $f$. We then obtain a local expression for the Kashiwara-Malgrange $V$-filtration on $i_{f,+}\scM(f^{-\alpha})$ along $\{t=0\}$ under the assumption $\rho_{f,\hspace{0.7pt}\fx}\subseteq(-2-\alpha,-\alpha)$. Finally, we give a formula for the Hodge filtration on $\scM(f^{-\alpha})$ in terms of this Kashiwara-Malgrange $V$-filtration. 

We write $i_f:X\to X\times\bC_t\, ; \, \fx \to (\fx,f(\fx))$ for the graph embedding of the function $f$.

\begin{prop}

The morphism 
\[\Phi:i_{f,+}\scM(f^{-\alpha})\to i_{f,+}\scO_X(*f)\, ; \, \sum_{i=0}^kh_if^{-\alpha}\partial_t^i \to \sum_{i=0}^kg_i\partial_t^i,\]
where $g_i$ are determined uniquely through the equality
\[\sum_{i=0}^kh_if^{-i}Q_i(-s-\alpha) = \sum_{i=0}^kg_if^{-i}Q_i(-s) \,\text{ in }\, \scO_X(*f)[s],\]
and where $Q_i(x):=\prod_{j=0}^{i-1}(x+j),$ is an isomorphism of $\scD_X\langle t,t^{-1},s\rangle$-modules, when $s$ acts as $s+\alpha$ on the right hand side of this map. Moreover, 
\[\Phi(V^{\gamma}i_{f,+}\scM(f^{-\alpha})) = V^{\gamma+\alpha}i_{f,+}\scO_X(*f).\]

\label{propiso}
    
\end{prop}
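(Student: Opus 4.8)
The plan is to construct the map $\Phi$ explicitly and check it is a well-defined isomorphism of $\scD_X\langle t,t^{-1},s\rangle$-modules that is strict for the $V$-filtrations, up to the shift by $\alpha$. First I would recall the standard description of the direct image along the graph embedding: $i_{f,+}\scM(f^{-\alpha})$ is, as an $\scO_X$-module, $\bigoplus_{i\geq 0}\scM(f^{-\alpha})\otimes\partial_t^i$, with $t$ acting as multiplication by $f$ plus the commutator $[t,\partial_t^i]=-i\partial_t^{i-1}$, and with the $\scD_X$-action on the factors twisted in the usual way; similarly for $i_{f,+}\scO_X(*f)$. The key observation is that on $\scM(f^{-\alpha})=\scO_X(*f)f^{-\alpha}$ the variable $s$ is made to act (via the identification $f^{-\alpha}\leftrightarrow f^s$ with $s=-\alpha$) by the rule that $t$ acts as $f$ and $-\partial_t t=-t\partial_t-1$ plays the role of $s$, so that the passage from one side to the other is exactly the substitution $s\mapsto s+\alpha$ recorded in the statement. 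The polynomials $Q_i(x)=\prod_{j=0}^{i-1}(x+j)$ enter because $\partial_t^i$ applied to $g f^{s}$ (or to $h f^{-\alpha}\partial_t^{\,0}$ rewritten) produces these Pochhammer-type factors: concretely $\partial_t^i(f^{s}) = Q_i(-s)\,f^{s-i}$ when one uses $t=f$ and commutation relations, which is precisely the bookkeeping in the displayed equality defining the $g_i$.

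I would then verify the three claims in turn. (1) Well-definedness and bijectivity: the equality $\sum h_i f^{-i}Q_i(-s-\alpha)=\sum g_i f^{-i}Q_i(-s)$ in $\scO_X(*f)[s]$ determines the $g_i$ uniquely by descending induction on $i$ (the leading term in $s$ of $Q_i$ is $s^i$, so the system is triangular with invertible leading coefficients), and symmetrically expresses $h_i$ in terms of $g_j$, $j\geq i$; hence $\Phi$ is an $\scO_X$-linear isomorphism on underlying modules. (2) Compatibility with the actions of $t$, $t^{-1}$, $\partial_t$, $s$, and $\scD_X$: this is a direct computation. The cleanest way is to note that both sides are generated over $\scD_X\langle t,t^{-1},\partial_t\rangle$ by their degree-zero parts ($\scM(f^{-\alpha})$ and $\scO_X(*f)$ respectively) and that $\Phi$ restricted there is the "identity" $h f^{-\alpha}\mapsto h f^{s}\big|$ intertwining $s\leftrightarrow s+\alpha$; one then checks $\Phi$ commutes with $\partial_t$ and $t$ using the defining equality and the recursion $Q_{i+1}(x)=(x+i)Q_i(x)$, and with $\scD_X$ because the twisting of the $\scD_X$-structure in $i_{f,+}$ is the same on both sides and only involves $f$, $\partial_j f$, and $\partial_t$, all of which are respected. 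Here one must be a little careful that "$s$ acts as $s+\alpha$ on the right-hand side" is consistent with the $\partial_t t$-action, i.e. that $-\partial_t t = s$ on $i_{f,+}\scM(f^{-\alpha})$ corresponds to $-\partial_t t = s+\alpha$ on $i_{f,+}\scO_X(*f)$; this is forced by $f^{-\alpha}=f^{s}|_{s=-\alpha}$ versus $f^{0}=f^{s}|_{s=0}$, exactly shifting $s$ by $\alpha$.

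For (3), the $V$-filtration statement $\Phi(V^{\gamma}i_{f,+}\scM(f^{-\alpha}))=V^{\gamma+\alpha}i_{f,+}\scO_X(*f)$: I would argue that $\Phi$ is an isomorphism of $\scD_X\langle t,t^{-1},\partial_t t\rangle$-modules sending the operator "$t\partial_t$" to "$t\partial_t-\alpha$" (equivalently $s\mapsto s-\alpha$ in the other normalization), and the $V$-filtration is characterized uniquely by its defining properties (exhaustive, $V^\bullet$-indexed decreasing, good, $t V^\gamma\subseteq V^{\gamma+1}$, $\partial_t V^\gamma\subseteq V^{\gamma-1}$, and $t\partial_t - \gamma$ nilpotent on $\mathrm{gr}_V^\gamma$) — by the Kashiwara–Malgrange uniqueness theorem. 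Since $\Phi$ transports $V^{\gamma}i_{f,+}\scM(f^{-\alpha})$ to a filtration on $i_{f,+}\scO_X(*f)$ with all the same properties except that the eigenvalue condition becomes "$t\partial_t-(\gamma+\alpha)$ nilpotent on the $\gamma$-th graded piece", the image filtration is, after reindexing by $+\alpha$, the $V$-filtration on $i_{f,+}\scO_X(*f)$; this gives the claimed identity.

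The main obstacle I anticipate is keeping the normalizations of $s$ straight across the three different objects ($\scD_X[s]$ acting on $f^s$, the module $\scM(f^{-\alpha})$ where formally $s=-\alpha$, and the graph-embedding picture where $s=-\partial_t t$), and in particular checking \emph{carefully} that the operator-level substitution $s\mapsto s+\alpha$ is simultaneously compatible with multiplication by $f$, with $\partial_t$, and with the $\scD_X$-twist — i.e. that the single map defined by the Pochhammer bookkeeping on underlying $\scO_X$-modules really is a morphism over the full ring $\scD_X\langle t,t^{-1},s\rangle$. Once that verification is done, bijectivity is the triangular-system argument above and the $V$-filtration claim follows formally from uniqueness of the $V$-filtration, so those parts should be routine.
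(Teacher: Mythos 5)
Your proposal is correct and reconstructs essentially the argument the paper relies on (it simply cites \cite{MP20}, Proposition 2.6): the paper's own Definition/Lemma \ref{deflem} makes your "Pochhammer bookkeeping" explicit as the factorisation $\Phi=\varphi_2^{-1}\circ\Phi'\circ\varphi_1$ through $\scM(f^{-\alpha})[s]f^s$ and $\scO_X(*f)[s]f^s$, with $\Phi'$ the substitution $s\mapsto s+\alpha$, and the $V$-filtration identity follows, as you say, from the uniqueness characterisation of the Kashiwara--Malgrange filtration once one checks that $\Phi$ intertwines $\partial_t t$ with $\partial_t t-\alpha$. Your triangular-system argument for bijectivity and the nilpotency bookkeeping on $\mathrm{gr}_V^{\gamma}$ are both sound.
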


\begin{proof}

See \cite{MP20}, Proposition 2.6.
    
\end{proof}

\begin{lem}

If $f$ is Euler homogeneous at $\fx$ and $\rho_{f,\hspace{0.7pt}\fx}\,\cap\,\left( \bZ_{<-1}-\alpha\right) = \emptyset$, then \[\text{\emph{ann}}_{\scD_{X,\hspace{0.7pt}\fx}}f_{\fx}^{-1-\alpha}=\text{\emph{ann}}_{\scD_{X,\hspace{0.7pt}\fx}}f_{\fx}^s+\scD_{X,\hspace{0.7pt}\fx}(E+1+\alpha),\] 
where $E$ is an Euler vector field for $f$ at $\fx$. Moreover,
\[\text{\emph{ann}}_{\scD_{X\times\bC_t,(\fx,0)}}f_{\fx}^{-1-\alpha}=\scD_{X\times\bC_t,(\fx,0)}\text{\emph{ann}}_{\scD_{X,\hspace{0.7pt}\fx}}f_{\fx}^s+\scD_{X\times\bC_t,(\fx,0)}(E+\partial_tt+1+\alpha)+\scD_{X\times\bC_t,(\fx,0)}(t-f_{\fx}),\]
when $f_{\fx}^{-1-\alpha}$ is viewed as an element of $i_{f,+}\scM(f^{-\alpha})_{(\fx,0)} \simeq \sum_{i\geq 0}\scM(f^{-\alpha})_{\fx}\partial_t^i$.

\label{lemann}
    
\end{lem}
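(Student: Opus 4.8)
The plan is to establish the first displayed equality at the level of germs in $\scD_{X,\fx}$, and then to deduce the second from it using the standard presentation of the direct image along the graph embedding, exactly as in \cite{BD24}. The inclusion $\supseteq$ in the first equality is immediate: each $P\in\text{ann}_{\scD_{X,\fx}}f_\fx^s$ annihilates $f_\fx^c$ for every $c$, since $\scO_X(*f)_\fx f_\fx^c$ is the specialisation of $\scO_X(*f)_\fx[s]f_\fx^s$ at $s=c$, and $E$ acts on $f_\fx^{-1-\alpha}$ as multiplication by $-(1+\alpha)$, so $(E+1+\alpha)f_\fx^{-1-\alpha}=0$. The engine for the reverse inclusion is a description of $\text{ann}_{\scD_{X,\fx}[s]}f_\fx^{s-1}$: since $E\cdot f_\fx^{s-1}=(s-1)f_\fx^{s-1}$ and $s$ is central, one gets $s^k\cdot f_\fx^{s-1}=(E+1)^k\cdot f_\fx^{s-1}$ for all $k$, hence $\big(\sum_k A_k s^k\big)\cdot f_\fx^{s-1}=\big(\sum_k A_k(E+1)^k\big)\cdot f_\fx^{s-1}$; this yields $\text{ann}_{\scD_{X,\fx}[s]}f_\fx^{s-1}=\scD_{X,\fx}[s]\,\text{ann}_{\scD_{X,\fx}}f_\fx^s+\scD_{X,\fx}[s](E-s+1)$, which is the place where Euler homogeneity enters.

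To link this with $f_\fx^{-1-\alpha}$ I would prove the saturation statement
\[(s+\alpha)\,\scO_X(*f)_\fx[s]f_\fx^{s-1}\cap\scD_{X,\fx}[s]f_\fx^{s-1}=(s+\alpha)\,\scD_{X,\fx}[s]f_\fx^{s-1}\]
under the hypothesis $\rho_{f,\fx}\cap(\bZ_{<-1}-\alpha)=\emptyset$. Iterating the functional equation $b_{f,\fx}(s)f_\fx^s\in\scD_{X,\fx}[s]f_\fx^{s+1}$ shows that every $u\in\scO_X(*f)_\fx[s]f_\fx^{s-1}$ satisfies $\delta(s)\,u\in\scD_{X,\fx}[s]f_\fx^{s-1}$ for some $\delta(s)$ equal to a product of factors $b_{f,\fx}(s-j)$ with $j\geq 2$; the hypothesis says exactly that $-\alpha$ is a root of no such $\delta(s)$, so $\delta(s)$ and $s+\alpha$ are coprime in $\bC[s]$ and a Bézout identity gives $u\in\scD_{X,\fx}[s]f_\fx^{s-1}$. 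Granting this: if $Pf_\fx^{-1-\alpha}=0$ then $Pf_\fx^{s-1}$ lies in the left-hand side above, hence equals $(s+\alpha)G(s)\,f_\fx^{s-1}$ for some $G(s)\in\scD_{X,\fx}[s]$, so $P-(s+\alpha)G(s)\in\text{ann}_{\scD_{X,\fx}[s]}f_\fx^{s-1}$; applying the ring homomorphism $\phi_{-\alpha}$ kills the $(s+\alpha)$-term and, together with the description of $\text{ann}_{\scD_{X,\fx}[s]}f_\fx^{s-1}$ above and $\phi_{-\alpha}(E-s+1)=E+1+\alpha$, puts $P$ in $\text{ann}_{\scD_{X,\fx}}f_\fx^s+\scD_{X,\fx}(E+1+\alpha)$.

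For the second equality: the same iteration of the functional equation shows $\scM(f^{-\alpha})_\fx=\scD_{X,\fx}\cdot f_\fx^{-1-\alpha}$ under the hypothesis on $\rho_{f,\fx}$, so by the standard presentation of the direct image of a cyclic $\scD$-module along the graph embedding (as in \cite{BD24} and \cite{MP20}), $i_{f,+}\scM(f^{-\alpha})_{(\fx,0)}$ is identified with $\scD_{X\times\bC_t,(\fx,0)}$ modulo the left ideal generated by $t-f_\fx$ together with the image of $\text{ann}_{\scD_{X,\fx}}f_\fx^{-1-\alpha}$ under $\xi\mapsto\xi+(\xi f)\partial_t$. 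Feeding in the first equality and using that the twisted Euler operator $E+(Ef)\partial_t+1+\alpha=E+f_\fx\partial_t+1+\alpha$ differs from $E+\partial_t t+1+\alpha$ by $\partial_t(t-f_\fx)\in\scD_{X\times\bC_t,(\fx,0)}(t-f_\fx)$, one reads off the asserted generators.

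The step I expect to be the crux is the saturation statement in the second paragraph: it is the only place where the hypothesis on $\rho_{f,\fx}$ is genuinely used, and making it precise requires care with the $\bC[s]$-module structure of $\scO_X(*f)_\fx[s]f_\fx^{s-1}$ and with working at the level of germs, so that it is the local Bernstein--Sato polynomial $b_{f,\fx}$, and not a global substitute, that controls the denominators produced by the functional equation.
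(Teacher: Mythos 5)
Your proof of the first displayed equality is correct but takes a genuinely different route from the paper's. Where the paper quotes Proposition 6.2 of Kashiwara for the identity $\text{ann}_{\scD_{X,\hspace{0.7pt}\fx}}f_{\fx}^{-1-\alpha}=(\text{ann}_{\scD_{X,\hspace{0.7pt}\fx}[s]}f_{\fx}^s+\scD_{X,\hspace{0.7pt}\fx}[s](s+\alpha+1))\cap\scD_{X,\hspace{0.7pt}\fx}$, you reprove the relevant case from scratch: your saturation statement, obtained from the iterated functional equation plus a B\'ezout identity in $\bC[s]$, is exactly the content of that citation, and your description of $\text{ann}_{\scD_{X,\hspace{0.7pt}\fx}[s]}f_{\fx}^{s-1}$ via the Euler field is the same computation (shifted by $s\mapsto s-1$) as the paper's description of $\text{ann}_{\scD_{X,\hspace{0.7pt}\fx}[s]}f_{\fx}^{s}$. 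This buys self-containedness at the cost of length; the argument is sound, including the reduction from $P\cdot f_{\fx}^{-1-\alpha}=0$ to $(s+\alpha)\mid a_P$ in $\scO_X(*f)_{\fx}[s]$.

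For the second equality the paper inducts on the $\partial_t$-order of an annihilating operator, whereas you invoke the cyclic presentation of $i_{f,+}$ of a cyclic module. That route is legitimate (and uses Lemma \ref{lemgen} to get cyclicity), but as written it has a gap. The presentation yields the left ideal generated by $t-f_{\fx}$ together with $\iota(Q)$ for $Q\in\text{ann}_{\scD_{X,\hspace{0.7pt}\fx}}f_{\fx}^{-1-\alpha}$, where $\iota$ is the ring homomorphism with $\iota(\xi)=\xi+(\xi f)\partial_t$ on vector fields. You correct for the twist on the Euler operator, but you silently replace $\iota(Q)$ by $Q$ for $Q\in\text{ann}_{\scD_{X,\hspace{0.7pt}\fx}}f_{\fx}^{s}$; these agree when $Q$ is a logarithmic vector field killing $f$, but for operators of order $\geq 2$ the difference $\iota(Q)-Q$ carries $\partial_t$-terms, and $\text{ann}_{\scD_{X,\hspace{0.7pt}\fx}}f_{\fx}^{s}$ need not be generated in order $\leq 1$. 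So you still owe the identity
\[\scD_{X\times\bC_t,(\fx,0)}\,\iota\bigl(\text{ann}_{\scD_{X,\hspace{0.7pt}\fx}}f_{\fx}^{s}\bigr)+\scD_{X\times\bC_t,(\fx,0)}(t-f_{\fx})=\scD_{X\times\bC_t,(\fx,0)}\,\text{ann}_{\scD_{X,\hspace{0.7pt}\fx}}f_{\fx}^{s}+\scD_{X\times\bC_t,(\fx,0)}(t-f_{\fx}).\]
This is true and fillable: writing $Q\equiv\sum_{i}\iota(Q^{(i)})\partial_t^{i}$ modulo $(t-f_{\fx})$, the $\scD_X\langle t,t^{-1},s\rangle$-linearity of $\varphi_1$ applied to $f^{\lambda}$ for every exponent $\lambda$ shows that each $Q^{(i)}$ again lies in $\text{ann}_{\scD_{X,\hspace{0.7pt}\fx}}f_{\fx}^{s}$, and an induction on the order of $Q$ then gives $\iota(Q)\equiv Q$ modulo the right-hand ideal. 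But this verification is absent from your write-up, and it is precisely the point where the naive inclusion $\scD_{X,\hspace{0.7pt}\fx}\subseteq\scD_{X\times\bC_t,(\fx,0)}$ appearing in the statement of the lemma has to be reconciled with the graph-embedding twist; the paper's induction on $\partial_t$-order avoids the issue entirely by never passing through $\iota$.
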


\begin{proof}

Firstly we show that 
\[\text{ann}_{\scD_{X,\hspace{0.7pt}\fx}[s]}f_{\fx}^s=\text{ann}_{\scD_{X,\hspace{0.7pt}\fx}}f_{\fx}^s+\scD_{X,\hspace{0.7pt}\fx}[s](E-s).\] 
$\supseteq$ is clear. If $P\in \text{ann}_{\scD_{X,\hspace{0.7pt}\fx}[s]}f_{\fx}^s$, then we may write 
\[P=\sum_{i\geq 0}P_is^i, \,\,\, P_i\in\scD_{X,\hspace{0.7pt}\fx}.\]
For $i\geq 1$, $E^i-s^i \in \scD_{X,\hspace{0.7pt}\fx}[s](E-s)$, so
\[P-\sum_{i\geq 0}P_iE^i \in \scD_{X,\hspace{0.7pt}\fx}[s](E-s).\]
Moreover, $\sum_{i\geq 0}P_is^i\in \text{ann}_{\scD_{X,\hspace{0.7pt}\fx}[s]}f_{\fx}^s$ implies that $\sum_{i\geq 0}P_iE^i\in \text{ann}_{\scD_{X,\hspace{0.7pt}\fx}}f_{\fx}^s$, so 
\[P\in \text{ann}_{\scD_{X,\hspace{0.7pt}\fx}}f_{\fx}^s+\scD_{X,\hspace{0.7pt}\fx}[s](E-s)\]
as required.

Now, Proposition 6.2 of \cite{Kash76} says that under our assumptions
\[\text{ann}_{\scD_{X,\hspace{0.7pt}\fx}}f_{\fx}^{-1-\alpha} = (\text{ann}_{\scD_{X,\hspace{0.7pt}\fx}[s]}f_{\fx}^s+\scD_{X,\hspace{0.7pt}\fx}[s](s+\alpha+1))\cap\scD_{X,\hspace{0.7pt}\fx},\]
from which the first statement in the lemma now follows.

Again, it is easy to check the reverse inclusion for the second statement. Let $P\in \text{ann}_{\scD_{X\times\bC_t,(\fx,0)}}f_{\fx}^{-1-\alpha}$. Without loss of generality we may write $P=\sum_iP_i\partial_t^{\,i}$, $P_i\in\scD_{X,\hspace{0.7pt}\fx}$. Write $k$ for the largest $i$ for which $P_i\neq 0$. We prove the required inclusion via induction on $k$. 

If $k=0$, then $P\in\scD_{X,\hspace{0.7pt}\fx}$. Considering the coefficient of $\partial_t^0$ in $P\cdot f_{\fx}^{-1-\alpha}$, we see that $P\cdot f_{\fx}^{-1-\alpha}=0$ in $\scM(f^{-\alpha})_{\fx}$ as well, implying by the above proof that 
\[P\in \text{ann}_{\scD_{X,\hspace{0.7pt}\fx}}f_{\fx}^s+\scD_{X,\hspace{0.7pt}\fx}(E+\alpha+1).\]
Write $P=Q+R(E+1+\alpha)$, $Q\in\text{ann}_{\scD_{X,\hspace{0.7pt}\fx}}f_{\fx}^s, R\in\scD_{X,\hspace{0.7pt}\fx}$. Then, since $E+\partial_tt+1+\alpha\in\text{ann}_{\scD_{X\times\bC_t,(\fx,0)}}f_{\fx}^{-1-\alpha}$, we have that $R\partial_tt\in\text{ann}_{\scD_{X\times\bC_t,(\fx,0)}}f_{\fx}^{-1-\alpha}$, implying in turn that $R\in\text{ann}_{\scD_{X\times\bC_t,(\fx,0)}}f_{\fx}^{-1-\alpha}$. Thus we may repeatedly iterate the above procedure to see that 
\[P\in\text{ann}_{\scD_{X,\hspace{0.7pt}\fx}}f_{\fx}^s+\scD_{X,\hspace{0.7pt}\fx}(E+\alpha+1)^k \,\,\,\text{ for all }\,\, k \geq 1.\]
In particular,
\[P\cdot f_{\fx}^s \in \scD_{X,\hspace{0.7pt}\fx}\cdot (s+\alpha+1)^kf_{\fx}^s \,\,\,\text{ for all }\,\,k\geq 1.\]
But this is only possible if $P\cdot f_{\fx}^s=0$, so in fact
\[P\in \text{ann}_{\scD_{X,\hspace{0.7pt}\fx}}f_{\fx}^s.\]

If instead $k>0$, then identically to above we have that
\[P_0\in \text{ann}_{\scD_{X,\hspace{0.7pt}\fx}}f_{\fx}^s+\scD_{X,\hspace{0.7pt}\fx}(E+\alpha+1),\]
so, if we write $P_0=Q+R(E+1+\alpha)$ with $Q\in\text{ann}_{\scD_{X,\hspace{0.7pt}\fx}}f_{\fx}^s$ and $R\in\scD_{X,\hspace{0.7pt}\fx}$, then
\[P-P_0-R\partial_tt\in\text{ann}_{\scD_{X\times\bC_t,(\fx,0)}}f_{\fx}^{-1-\alpha}.\]
But this then implies that 
\[\sum_{i\geq 0}P_{i+1}\partial_t^i-Rt\in\text{ann}_{\scD_{X\times\bC_t,(\fx,0)}}f_{\fx}^{-1-\alpha}.\]
This operator has order $k-1$, so we may apply our inductive hypothesis to see that 
\[\sum_{i\geq 0}P_{i+1}\partial_t^i-Rt\in\scD_{X\times\bC_t,(\fx,0)}\text{ann}_{\scD_{X,\hspace{0.7pt}\fx}}f_{\fx}^s+\scD_{X\times\bC_t,(\fx,0)}(E+\partial_tt+1+\alpha)+\scD_{X\times\bC_t,(\fx,0)}(t-f_{\fx}),\]
which of course then implies the same for $P-P_0-R\partial_tt$. Since
\[P=(P-P_0-R\partial_tt)+(Q+R(E+\partial_tt+1+\alpha)),\]
we conclude as required that 
\[P\in\scD_{X\times\bC_t,(\fx,0)}\text{ann}_{\scD_{X,\hspace{0.7pt}\fx}}f_{\fx}^s+\scD_{X\times\bC_t,(\fx,0)}(E+\partial_tt+1+\alpha)+\scD_{X\times\bC_t,(\fx,0)}(t-f_{\fx}),\]

\end{proof}

\begin{lem}

$k \in \bZ$. $\scM(f^{-\alpha})_{\fx}$ is generated by $f_{\fx}^{-\alpha-k}$ if and only if $\rho_{f,\hspace{0.7pt}\fx}\cap\left( \bZ_{<-k}-\alpha\right) = \emptyset$.

\label{lemgen}
    
\end{lem}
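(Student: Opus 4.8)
The plan is to reduce the stated equivalence to a one‑step comparison between $\scD_{X,\fx}f_\fx^{\nu}$ and $\scD_{X,\fx}f_\fx^{\nu+1}$, to settle that comparison via the Bernstein--Sato functional equation in one direction, and via Kashiwara's computation of $\mathrm{ann}_{\scD_{X,\fx}}f_\fx^{\nu}$ in the other. For the reduction, for $j\in\bZ$ I would put $G_j:=\scD_{X,\fx}\cdot f_\fx^{-\alpha-j}\subseteq\scM(f^{-\alpha})_\fx$, where $f_\fx^{-\alpha-j}:=f_\fx^{-j}f_\fx^{-\alpha}$. Since $f_\fx^{-\alpha-j}=f_\fx\cdot f_\fx^{-\alpha-j-1}$ with $f_\fx\in\scO_{X,\fx}\subseteq\scD_{X,\fx}$, one has $G_j\subseteq G_{j+1}$, and $\bigcup_j G_j=\scO_{X,\fx}(*f)f_\fx^{-\alpha}=\scM(f^{-\alpha})_\fx$; so the $G_j$ form an increasing exhaustive filtration and $\scM(f^{-\alpha})_\fx=G_k$ iff $G_j=G_{j+1}$ for all integers $j\geq k$, i.e. iff $f_\fx^{\nu}\in\scD_{X,\fx}f_\fx^{\nu+1}$ for every $\nu\in\{-\alpha-k-1,-\alpha-k-2,\dots\}$. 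Running $\nu$ over this set, the condition ``$b_{f,\fx}(\nu)\neq 0$'' is exactly the condition ``$\rho_{f,\fx}\cap(\bZ_{<-k}-\alpha)=\emptyset$'', so the whole lemma follows from the one‑step claim: for $\nu\in\bC$, one has $f_\fx^{\nu}\in\scD_{X,\fx}f_\fx^{\nu+1}$ if and only if $b_{f,\fx}(\nu)\neq 0$.

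The easy half of the one‑step claim is immediate: specialising the functional equation $b_{f,\fx}(s)f_\fx^s=P(s)f_\fx^{s+1}$ at $s=\nu$ shows $b_{f,\fx}(\nu)\neq 0\Rightarrow f_\fx^{\nu}\in\scD_{X,\fx}f_\fx^{\nu+1}$; in particular, if $\rho_{f,\fx}\cap(\bZ_{<-k}-\alpha)=\emptyset$ then all steps $G_j=G_{j+1}$ ($j\geq k$) hold and $G_k=\bigcup_j G_j=\scM(f^{-\alpha})_\fx$. For the converse I would argue by contraposition: assuming $\rho_{f,\fx}\cap(\bZ_{<-k}-\alpha)\neq\emptyset$, I choose the \emph{largest} integer $i_0>k$ with $\nu_0:=-\alpha-i_0\in\rho_{f,\fx}$ (there are finitely many candidates, as $\rho_{f,\fx}\subseteq(-n,0)$), so that by maximality $\nu_0-m\notin\rho_{f,\fx}$ for every integer $m\geq 1$. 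It then suffices to prove $f_\fx^{\nu_0}\notin\scD_{X,\fx}f_\fx^{\nu_0+1}$: that gives $G_{i_0-1}\subsetneq G_{i_0}$, and since $i_0-1\geq k$ one obtains $G_k\subseteq G_{i_0-1}\subsetneq G_{i_0}\subseteq\scM(f^{-\alpha})_\fx$, whence $G_k\neq\scM(f^{-\alpha})_\fx$.

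To prove $f_\fx^{\nu_0}\notin\scD_{X,\fx}f_\fx^{\nu_0+1}$, given that $b_{f,\fx}(\nu_0)=0$ and $\nu_0-m\notin\rho_{f,\fx}$ for $m\geq 1$, I would set $M:=\scD_{X,\fx}[s]f_\fx^s$ and $N:=M/\scD_{X,\fx}[s]f_\fx^{s+1}$. Since $N=\scD_{X,\fx}[s]\cdot\overline{f_\fx^{\,s}}$ is cyclic and $s$ is central, $\mathrm{Ann}_{\bC[s]}N=\mathrm{Ann}_{\bC[s]}(\overline{f_\fx^{\,s}})=(b_{f,\fx}(s))$ by the very definition of $b_{f,\fx}$. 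As $\nu_0$ is a root of $b_{f,\fx}$ and this ideal is \emph{exactly} the annihilator of $N$ over the Artinian ring $\bC[s]/(b_{f,\fx})$, the $(s-\nu_0)$‑primary component of $N$ must be nonzero, hence $N/(s-\nu_0)N\neq 0$ (Nakayama over the corresponding local Artinian ring). On the other hand, the surjection $M\twoheadrightarrow\scD_{X,\fx}f_\fx^{\nu_0}$, $P(s)f_\fx^s\mapsto P(\nu_0)f_\fx^{\nu_0}$, kills $(s-\nu_0)M$, and---using Kashiwara's Proposition~6.2 of \cite{Kash76}, applicable precisely because $\nu_0-m\notin\rho_{f,\fx}$ for $m\geq 1$, which identifies $\mathrm{ann}_{\scD_{X,\fx}}f_\fx^{\nu_0}$ with $(\mathrm{ann}_{\scD_{X,\fx}[s]}f_\fx^s+\scD_{X,\fx}[s](s-\nu_0))\cap\scD_{X,\fx}$---one checks its kernel is exactly $(s-\nu_0)M$, so $M/(s-\nu_0)M\isommap\scD_{X,\fx}f_\fx^{\nu_0}$. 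Quotienting both sides by the image $\scD_{X,\fx}f_\fx^{\nu_0+1}$ of $\scD_{X,\fx}[s]f_\fx^{s+1}$ yields $\scD_{X,\fx}f_\fx^{\nu_0}/\scD_{X,\fx}f_\fx^{\nu_0+1}\cong N/(s-\nu_0)N\neq 0$, which is what we need.

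The one nonformal ingredient---and the step I expect to be the main obstacle---is the isomorphism $M/(s-\nu_0)M\cong\scD_{X,\fx}f_\fx^{\nu_0}$, equivalently the fact that the $\scD_{X,\fx}[s]$‑annihilator of $f_\fx^s$ reduces modulo $(s-\nu_0)$ to the $\scD_{X,\fx}$‑annihilator of $f_\fx^{\nu_0}$; this is Kashiwara's Proposition~6.2, and the purpose of taking $i_0$ maximal is exactly to make its hypothesis $\nu_0-m\notin\rho_{f,\fx}$ ($m\geq 1$) automatic, so that no extra assumption on $f$ (such as Euler homogeneity) enters.
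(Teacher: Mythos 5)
Your proof is correct and follows essentially the same route as the paper: the same induction via the functional equation for the easy direction, and for the converse the same two key inputs, namely Kashiwara's Proposition~6.2 applied at the deepest integer-shifted root (where its hypothesis is automatic) together with the minimality of $b_{f,\hspace{0.7pt}\fx}$. The only difference is cosmetic: you phrase the contradiction with minimality via the nonvanishing of the $(s-\nu_0)$-primary component of $\scD_{X,\hspace{0.7pt}\fx}[s]f_{\fx}^s/\scD_{X,\hspace{0.7pt}\fx}[s]f_{\fx}^{s+1}$, whereas the paper manipulates the corresponding ideals in $\scD_{X,\hspace{0.7pt}\fx}[s]$ to produce $\tfrac{b_{f,\hspace{0.7pt}\fx}(s)}{s+\alpha+k}f_{\fx}^s\in\scD_{X,\hspace{0.7pt}\fx}[s]f_{\fx}^{s+1}$ directly.
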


\begin{proof}

\underline{$\Leftarrow$} It suffices to prove that $f_{\fx}^{-\alpha-l}\in\scD_{X,\hspace{0.7pt}\fx}\cdot f_{\fx}^{-\alpha-k}$ for all $l \geq k$. For $l=k$ this is trivial, so assume $l>k$. We prove the result inductively. 

Choose $P(s)\in\scD_{X,\hspace{0.7pt}\fx}[s]$ such that
\[P(s)f_{\fx}^{s+1}=b_{f,\hspace{0.7pt}\fx}(s)f_{\fx}^s.\]
Then $b_{f,\hspace{0.7pt}\fx}(-\alpha-l)\neq 0$ by hypothesis, so 
\[f_{\fx}^{-\alpha-l}=b_{f,\hspace{0.7pt}\fx}(-\alpha-l)^{-1}P(-\alpha-l)f_{\fx}^{-\alpha-(l-1)} \in \scD_{X,\hspace{0.7pt}\fx}\cdot f_{\fx}^{-\alpha-k}\]
by the inductive hypothesis, showing the result.

\underline{$\Rightarrow$} Assume for contradiction that $b_{f,\hspace{0.7pt}\fx}(-\alpha-k)=0$ but also that $\rho_{f,\hspace{0.7pt}\fx}\cap \left(\bZ_{< -k}-\alpha\right) = \emptyset$ and that $\scM(f^{-\alpha})_{\fx}$ is generated by $f_{\fx}^{-\alpha-k+1}$. Then, by the above proof, $\scD_{X,\hspace{0.7pt}\fx}\cdot f_{\fx}^{-\alpha-k}=\scD_{X,\hspace{0.7pt}\fx}\cdot f_{\fx}^{-\alpha-k+1}$. This implies that 
\[\scD_{X,\hspace{0.7pt}\fx}=\scD_{X,\hspace{0.7pt}\fx}f_{\fx}+\text{ann}_{\scD_{X,\hspace{0.7pt}\fx}}f_{\fx}^{-\alpha-k},\]
implying, by using Proposition 6.2 of \cite{Kash76}, that 
\[\scD_{X,\hspace{0.7pt}\fx}[s]=\scD_{X,\hspace{0.7pt}\fx}[s]f_{\fx}+\text{ann}_{\scD_{X,\hspace{0.7pt}\fx}[s]}f_{\fx}^s + \scD_{X,\hspace{0.7pt}\fx}[s](s+\alpha+k).\]
Therefore
\[\frac{b_{f,\hspace{0.7pt}\fx}(s)}{s+\alpha+k}\in \scD_{X,\hspace{0.7pt}\fx}[s]f_{\fx}+\text{ann}_{\scD_{X,\hspace{0.7pt}\fx}[s]}f_{\fx}^s + \scD_{X,\hspace{0.7pt}\fx}[s]b_{f,\hspace{0.7pt}\fx}(s),\]
implying that 
\[\frac{b_{f,\hspace{0.7pt}\fx}(s)}{s+\alpha+k}f_{\fx}^s \in \scD_{X,\hspace{0.7pt}\fx}[s]f_{\fx}^{s+1},\]
contradicting the minimality of the Bernstein-Sato polynomial $b_{f,\hspace{0.7pt}\fx}(s)$.
    
\end{proof}

\begin{rem2}

In the situation of the above lemma, it is then easy to show that $i_{f,+}\scM(f^{-\alpha})_{(\fx,0)}$ is also generated by $f_{\fx}^{-\alpha-k}$.
    
\end{rem2}

\begin{defn}

Assume that $\rho_{f,\hspace{0.7pt}\fx}\cap\left(\bZ_{<-1}-\alpha\right)=\emptyset$. Then the \emph{induced $V$-filtration} on $i_{f,+}\scM(f^{-\alpha})_{(\fx,0)}$ is defined to be
\[V^k_{\text{ind}}i_{f,+}\scM(f^{-\alpha})_{(\fx,0)}:=V^k\scD_{X\times\bC, \,(\fx,0)}\cdot f_{\fx}^{-1-\alpha}.\]
The \emph{order filtration} on $\scM(f^{-\alpha})_{\fx}$ is defined to be
\[F_k^{\text{ord}}\scM(f^{-\alpha})_{\fx}:=F_k\scD_{X, \,\hspace{0.7pt}\fx}\cdot f_{\fx}^{-1-\alpha}.\]
The \emph{order filtration} on $i_{f,+}\scM(f^{-\alpha})_{(\fx,0)}$ is defined to be
\[F_k^{\text{ord}}i_{f,+}\scM(f^{-\alpha})_{(\fx,0)}:=F_k\scD_{X\times\bC, \,(\fx,0)}\cdot f_{\fx}^{-1-\alpha}.\]
    
\end{defn}

\begin{lem}

The Bernstein-Sato polynomial associated to $V^{\bullet}_{\text{\emph{ind}}}i_{f,+}\scM(f^{-\alpha})_{(\fx,0)}$ is $b_{f,\hspace{0.7pt}\fx}(-s-1-\alpha)$.

\label{lemBfcn}
    
\end{lem}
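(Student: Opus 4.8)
The Bernstein–Sato polynomial $b_V(s)$ associated to the filtration $V^\bullet_{\text{ind}}$ is by definition the minimal monic polynomial such that $b_V(\partial_t t)$ sends $\operatorname{gr}^0_{V_{\text{ind}}}$ into itself in the appropriate sense — equivalently, the minimal monic $b_V$ with $b_V(\partial_t t) f_{\fx}^{-1-\alpha} \in V^{>0}\scD_{X\times\bC,(\fx,0)}\cdot f_{\fx}^{-1-\alpha} = V^1_{\text{ind}}$. So the whole statement reduces to computing the action of $\partial_t t$ on $f_{\fx}^{-1-\alpha}$, modulo $V^1_{\text{ind}}$, and identifying its minimal polynomial. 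The natural route is to transport everything through the isomorphism $\Phi$ of Proposition \ref{propiso}, which intertwines $V^\gamma i_{f,+}\scM(f^{-\alpha})$ with $V^{\gamma+\alpha} i_{f,+}\scO_X(*f)$ and under which $s$ (i.e. $-\partial_t t$) becomes $s+\alpha$. Under $\Phi$ the element $f_{\fx}^{-1-\alpha}$ (which is $\sum_i h_i f^{-\alpha}\partial_t^i$ with $h_0 = f^{-1}$, higher $h_i$ determined by the Euler-type relations, but the leading behaviour is $f^{-1}$) corresponds to a cyclic generator whose $V$-order is shifted by $\alpha$; the classical fact (e.g. from \cite{MP20} or the $\alpha=0$ case in \cite{BD24}) is that the Bernstein–Sato polynomial of the induced $V$-filtration on $i_{f,+}\scO_X(*f)_{(\fx,0)}$ generated by $f_{\fx}^{-1}$ is $b_{f,\fx}(-s-1)$.

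**Key steps, in order.** First I would recall precisely the definition of "the Bernstein–Sato polynomial associated to a good $V$-filtration along $\{t=0\}$": it is the minimal polynomial $b(s)$ with $b(-\partial_t t)\cdot v \in V^{1}\scD \cdot v$ for the chosen generator $v$, and note this is independent of the good generator chosen within the same $V$-filtration. Second, I would write down the classical identity in $i_{f,+}\scO_X(*f)$: from $b_{f,\fx}(s) f^s \in \scD[s] f^{s+1}$ one gets, after the standard translation $s \leftrightarrow -\partial_t t$ and $f^s \leftrightarrow \delta(t-f)$, that $b_{f,\fx}(-\partial_t t - 1)$ annihilates $f_{\fx}^{-1}$ modulo $V^1$, and minimality of $b_{f,\fx}$ gives that $b_{f,\fx}(-s-1)$ is exactly the Bernstein–Sato polynomial for $V^\bullet_{\text{ind}} i_{f,+}\scO_X(*f)_{(\fx,0)}$. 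Third, I would apply $\Phi$: since $\Phi(V^\gamma i_{f,+}\scM(f^{-\alpha})) = V^{\gamma+\alpha} i_{f,+}\scO_X(*f)$ and $\Phi$ converts the $s$-action on the left (which is $-\partial_t t$) into $s+\alpha$ on the right (i.e. $-\partial_t t + \alpha$), a polynomial $P(\partial_t t)$ annihilates the $\scM(f^{-\alpha})$-generator modulo its $V^1$ exactly when $P$, with argument shifted by $\alpha$, annihilates the $\scO_X(*f)$-side generator modulo $V^1$. Concretely, the generator $f_{\fx}^{-1-\alpha}$ on the left corresponds to (a $\scD$-multiple of) $f_{\fx}^{-1}$ on the right — one must check the $V$-orders match, which they do because $-1-\alpha$ is sent to $-1$ under the $\alpha$-shift on exponents — so the Bernstein–Sato polynomial on the left is $b_{f,\fx}(-(s)-1)$ read off with the argument conventions, which after unwinding the shift is precisely $b_{f,\fx}(-s-1-\alpha)$.

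**Main obstacle.** The delicate point is not any single computation but keeping the three different $s$'s straight: the formal $s$ in $\scD[s]$ acting on $f^s$; the operator $-\partial_t t$ on $i_{f,+}$; and the shifted action $s+\alpha$ that $\Phi$ introduces on the $\scO_X(*f)$-side. I expect the real work to be verifying carefully that the cyclic generator $f_{\fx}^{-1-\alpha}$ of $V^0_{\text{ind}} i_{f,+}\scM(f^{-\alpha})_{(\fx,0)}$ maps under $\Phi$ to an element generating the correct $V$-filtration step of $i_{f,+}\scO_X(*f)_{(\fx,0)}$ with the expected $b$-function, and that minimality is preserved — i.e. that no proper factor of $b_{f,\fx}(-s-1-\alpha)$ works. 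Minimality transfers cleanly through $\Phi$ since $\Phi$ is an isomorphism of $\scD_X\langle t, t^{-1}, s\rangle$-modules, so any annihilation relation modulo $V^1$ on one side pulls back to one on the other; hence the minimal such polynomials correspond under the $\alpha$-shift, giving exactly $b_{f,\fx}(-s-1-\alpha)$. The assumption $\rho_{f,\fx}\cap(\bZ_{<-1}-\alpha)=\emptyset$ (needed to even define $V^\bullet_{\text{ind}}$ via the generator $f_{\fx}^{-1-\alpha}$, by Lemma \ref{lemgen}) is what guarantees $f_{\fx}^{-1-\alpha}$ is genuinely a good generator, so this hypothesis should be invoked at the outset.
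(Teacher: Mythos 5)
Your proposal is correct, but it follows a genuinely different route from the paper. The paper works entirely on the $\scM(f^{-\alpha})$ side: it transports $V^k_{\text{ind}}$ through $\varphi_1$ to $\scD_{X,\fx}[s]\cdot f_{\fx}^{s+k-1-\alpha}$ for $k\ge 0$, reads off the \emph{exact} $\bC[s]$-annihilator of each graded piece (which gives both divisibilities at once), and then does a separate explicit computation for $k<0$ using $V^k_{\text{ind}}=V^{k+1}_{\text{ind}}+\scD_{X,\fx}[s]s^{-k}f_{\fx}^{s+k-1-\alpha}$. You instead (i) reduce the $B$-function condition to the single cyclic generator at level $0$, and (ii) transfer the known $\alpha=0$ statement through the twist isomorphism $\Phi$ of Proposition \ref{propiso}. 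Both work; the paper's argument is self-contained, while yours is shorter and makes the mechanism of the $\alpha$-shift transparent (it comes purely from $\Phi$ intertwining $s$ with $s+\alpha$, i.e.\ $\partial_t t$ with $\partial_t t-\alpha$, so the ideal of admissible $b$'s pulls back from $(b_{f,\fx}(-s-1))$ to $(b_{f,\fx}(-s-1-\alpha))$). Two points in your write-up should be tightened. First, the claimed equivalence between ``$b(\partial_t t-k)V^k_{\text{ind}}\subseteq V^{k+1}_{\text{ind}}$ for all $k\in\bZ$'' and the single condition ``$b(\partial_t t)f_{\fx}^{-1-\alpha}\in V^1_{\text{ind}}$'' is true but not a tautology: it uses cyclicity together with $V^k\scD_{X\times\bC}=t^kV^0\scD_{X\times\bC}$ for $k\ge 0$, $V^k\scD_{X\times\bC}=\sum_{j\le -k}\partial_t^{\,j}V^0\scD_{X\times\bC}$ for $k<0$, and the commutation rule $b(\partial_t t)\partial_t^{\,j}=\partial_t^{\,j}b(\partial_t t-j)$; a line of justification is needed. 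Second, your description of $f_{\fx}^{-1-\alpha}$ as ``$\sum_i h_i f^{-\alpha}\partial_t^i$ with higher $h_i$ determined by Euler-type relations'' and of its image as ``a $\scD$-multiple of $f_{\fx}^{-1}$'' is muddled: the element is simply $f_{\fx}^{-1}\cdot f_{\fx}^{-\alpha}\partial_t^0$, and the formula defining $\Phi$ (with $Q_0=1$, $h_i=0$ for $i>0$) gives $\Phi(f_{\fx}^{-1-\alpha})=f_{\fx}^{-1}$ \emph{exactly}; this exactness matters, since only then does $\Phi$ carry $V^k_{\text{ind}}i_{f,+}\scM(f^{-\alpha})_{(\fx,0)}$ onto $V^k_{\text{ind}}i_{f,+}\scO_X(*f)_{(\fx,0)}$ with no index shift, leaving the entire $\alpha$-shift in the final answer to come from the twisted $s$-action. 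With these two points made precise, your argument is a complete and valid alternative proof.
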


\begin{proof}

It is clear that $V_{\text{ind}}^{\bullet}i_{f,+}\scM(f^{-\alpha})_{(\fx,0)}$ is a good $V$-filtration. Therefore it has some non-zero B-function, monic of minimal degree (the \emph{Bernstein-Sato polynomial} associated to the good $V$-filtration $V_{\text{ind}}^{\bullet}i_{f,+}\scM(f^{-\alpha})_{(\fx,0)}$), which we will denote $b_{V_{\text{ind}}^{\bullet}, \,\hspace{0.7pt}\fx}(s)$.

Firstly, consider the map
\[\varphi_1:i_{f,+}\scM(f^{-\alpha}) \to \scM(f^{-\alpha})[s]f^s \, ; \, uf^{-\alpha}\partial_t^i \mapsto uf^{-i-\alpha}Q_i(-s)f^s.\]
This is an isomorphism of $\scD_X\langle t,t^{-1},s\rangle$-modules, where $t$ acts as $s\mapsto s+1$ on the right hand side (see Proposition 2.5 of \cite{MP20}). Due to this linearity, we have for $k\geq 0$ that
\[\varphi_1\left( V^k_{\text{ind}}i_{f,+}\scM(f^{-\alpha})_{(\fx,0)}\right) = \scD_{X,\hspace{0.7pt}\fx}[s]\cdot f_{\fx}^{s+k-1-\alpha},\]
implying that
\[\text{gr}_{V_{\text{ind}}}^ki_{f,+}\scM(f^{-\alpha})_{(\fx,0)} \simeq \frac{\scD_{X,\hspace{0.7pt}\fx}[s]\cdot f_{\fx}^{s+k-1-\alpha}}{\scD_{X,\hspace{0.7pt}\fx}[s]\cdot f_{\fx}^{s+k+1-1-\alpha}}\]
for $k \geq 0$. By definition, the $\bC[s]$-annihilator of this module is generated by $b_{f,\hspace{0.7pt}\fx}(s+k-1-\alpha)$. Remembering that $s$ is acting as $-\partial_tt$ on the left hand side of the above isomorphism, we conclude that
\[b_{f,\hspace{0.7pt}\fx}(-s-1-\alpha)\, \big| \, b_{V_{\text{ind}}^{\bullet},\hspace{0.7pt}\fx}(s),\]
since $b_{V_{\text{ind}},\hspace{0.7pt}\fx}(\partial_tt-k)$ annihilates $\text{gr}_{V_{\text{ind}}}^ki_{f,+}\scM(f^{-\alpha})_{(\fx,0)}$ by definition. 

So it suffices now to prove that $b_{f,\hspace{0.7pt}\fx}(-\partial_tt+k-1-\alpha)$ also annihilates $\text{gr}_{V_{\text{ind}}}^ki_{f,+}\scM(f^{-\alpha})_{(\fx,0)}$ when $k < 0$, as this will show conversely that 
\[b_{V_{\text{ind},\hspace{0.7pt}\fx}^{\bullet}}(s)\, \big| \, b_{f,\hspace{0.7pt}\fx}(-s-1-\alpha).\]

Now by the definition of the induced $V$-filtration,
\[\varphi_1\left( V^k_{\text{ind}}i_{f,+}\scM(f^{-\alpha})_{(\fx,0)}\right) = \varphi_1\left( V^{k+1}_{\text{ind}}i_{f,+}\scM(f^{-\alpha})_{(\fx,0)}\right)+\scD_{X,\hspace{0.7pt}\fx}[s]s^{-k}f_{\fx}^{s+k-1-\alpha}\]
whenever $k \leq -1$. Thus
\begin{align*}
\varphi_1(b_{f,\hspace{0.7pt}\fx}(-\partial_tt+k-1-\alpha)\cdot V^k_{\text{ind}}&i_{f,+}\scM(f^{-\alpha})_{(\fx,0)}) \\&= \varphi_1\left(b_{f,\hspace{0.7pt}\fx}(-\partial_tt+k-1-\alpha)\cdot V^{k+1}_{\text{ind}}i_{f,+}\scM(f^{-\alpha})_{(\fx,0)}\right)  \\& \;\;\;\;+s^{-k+1}b_{f,\hspace{0.7pt}\fx}(s+k-1-\alpha)\cdot \scD_{X,\hspace{0.7pt}\fx}[s]\cdot f_{\fx}^{s+k-1-\alpha}\\
& \subseteq \varphi_1\left(V^{k+1}_{\text{ind}}i_{f,+}\scM(f^{-\alpha})_{(\fx,0)}\right) + s^{-k+1} \scD_{X,\hspace{0.7pt}\fx}[s]\cdot f_{\fx}^{s+k-\alpha}\\
& \subseteq \varphi_1\left(V^{k+1}_{\text{ind}}i_{f,+}\scM(f^{-\alpha})_{(\fx,0)}\right),
\end{align*}
which of course in turn implies that 
\[b_{f,\hspace{0.7pt}\fx}(-\partial_tt+k-1-\alpha)\cdot \text{gr}_{V_{\text{ind}}}^ki_{f,+}\scM(f^{-\alpha})_{(\fx,0)}=0,\]
as required.
    
\end{proof}

\begin{prop}

If $\rho_{f,\hspace{0.7pt}\fx}\subseteq(-\alpha-2,-\alpha)$, then, $\forall k \in \bZ$, 
\[V^ki_{f,+}\scM(f^{-\alpha})_{(\fx,0)} = V^{k+1}_{\text{\emph{ind}}}i_{f,+}\scM(f^{-\alpha})_{(\fx,0)} + \beta_{f,-\alpha,\hspace{0.7pt}\fx}(\partial_tt-k+\alpha)\cdot V^k_{\text{\emph{ind}}}i_{f,+}\scM(f^{-\alpha})_{(\fx,0)},\]
where
\[\beta_{f,-\alpha,\hspace{0.7pt}\fx}(s)=\prod_{\lambda \in \rho_{f,\hspace{0.7pt}\fx}\cap (-\alpha-1,-\alpha)}(s+\lambda+1)^{l_{\lambda}},\]
where $l_{\lambda}$ is the multiplicity of $\lambda$ as a root of $b_{f,\hspace{0.7pt}\fx}(s)$.

\label{propVfil}
    
\end{prop}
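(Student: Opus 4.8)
\noindent\emph{Proof strategy (sketch).} The plan is to show that the right-hand side, call it $W^k$, is a good $V$-filtration on $i_{f,+}\scM(f^{-\alpha})_{(\fx,0)}$ on each of whose graded pieces $\partial_tt$ acts with eigenvalues in the half-open unit window $[k,k+1)$ normalising the Kashiwara--Malgrange filtration along $\{t=0\}$; by uniqueness of the latter this forces $W^\bullet=V^\bullet$. The whole argument is the word-for-word translation of the $\alpha=0$ case of \cite{BD24} with a uniform shift by $\alpha$, so I only sketch the skeleton.

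The decisive structural input is Lemma \ref{lemBfcn}: the $b$-function of $V^\bullet_{\text{ind}}$ is $b_{f,\fx}(-s-1-\alpha)$, so the hypothesis $\rho_{f,\fx}\subseteq(-\alpha-2,-\alpha)$ confines its roots to the open interval $(-1,1)$. Concretely, identifying $\text{gr}^k_{V_{\text{ind}}}$ with $\scD_{X,\fx}[s]f_\fx^{s+k-1-\alpha}/\scD_{X,\fx}[s]f_\fx^{s+k-\alpha}$ via the isomorphism $\varphi_1$ from that proof, $\partial_tt$ acts on $\text{gr}^k_{V_{\text{ind}}}$ with all eigenvalues in $(k-1,k+1)$; so each eigenvalue is either already in the target window $[k,k+1)$ or lies in $(k-1,k)$, i.e. is exactly one step too low, and never too high. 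It is precisely this ``window of length less than $2$ straddling only to the left'' that allows a single correction to suffice, and it is here that the assumption on $\rho_{f,\fx}$ is used. Moreover $\beta_{f,-\alpha,\fx}(\partial_tt-k+\alpha)$ is, up to sign, the product over $\lambda\in\rho_{f,\fx}\cap(-\alpha-1,-\alpha)$ of the factors $(\partial_tt-\mu)^{l_\lambda}$, with $\mu=k-1-\alpha-\lambda\in(k-1,k)$ the too-low eigenvalue attached to $\lambda$; since $(\partial_tt-\mu)^{l_\lambda}$ already annihilates the $\mu$-generalised eigenspace of $\text{gr}^k_{V_{\text{ind}}}$ (Lemma \ref{lemBfcn}), multiplication by $\beta_{f,-\alpha,\fx}(\partial_tt-k+\alpha)$ kills exactly the too-low generalised eigenspaces of $\text{gr}^k_{V_{\text{ind}}}$ and is invertible on the rest.

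Next, using the commutation identities $t\,g(\partial_tt)=g(\partial_tt-1)\,t$ and $\partial_t\,g(\partial_tt)=g(\partial_tt+1)\,\partial_t$, together with $tV^j_{\text{ind}}=V^{j+1}_{\text{ind}}$ for $j\geq 0$ and $\partial_tV^j_{\text{ind}}\subseteq V^{j-1}_{\text{ind}}$, one checks directly that $tW^k\subseteq W^{k+1}$ and $\partial_tW^k\subseteq W^{k-1}$; each $W^k$ is a coherent $V^0\scD$-submodule (a sum of two such, as $\partial_tt\in V^0\scD_{X\times\bC,(\fx,0)}$), and the sandwich $V^{k+1}_{\text{ind}}\subseteq W^k\subseteq V^k_{\text{ind}}$ gives exhaustiveness, separatedness and goodness. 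Then, from the chain $V^{k+2}_{\text{ind}}\subseteq W^{k+1}\subseteq V^{k+1}_{\text{ind}}\subseteq W^k$, one obtains the short exact sequence
\[0\longrightarrow V^{k+1}_{\text{ind}}/W^{k+1}\longrightarrow \text{gr}^k_W\longrightarrow W^k/V^{k+1}_{\text{ind}}\longrightarrow 0,\]
whose left-hand term is the cokernel of $\beta_{f,-\alpha,\fx}(\partial_tt-(k+1)+\alpha)$ acting on $\text{gr}^{k+1}_{V_{\text{ind}}}$ and whose right-hand term is the image of $\beta_{f,-\alpha,\fx}(\partial_tt-k+\alpha)$ acting on $\text{gr}^k_{V_{\text{ind}}}$; by the eigenspace analysis above, both are supported on $\partial_tt$-eigenvalues in $[k,k+1)$. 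Hence $\partial_tt$ acts on $\text{gr}^k_W$ with eigenvalues only in the correct window, and therefore $W^\bullet=V^\bullet$.

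The one step demanding genuine work is the identification of these two outer terms --- equivalently, that passage to $W^k/V^{k+1}_{\text{ind}}$ loses no in-range eigenvalue and that $V^{k+1}_{\text{ind}}/W^{k+1}$ contributes no too-low one. Both come down, via the explicit $\varphi_1$-picture of the modules $\scD_{X,\fx}[s]f_\fx^{s+j-1-\alpha}$ at the consecutive levels $j=k,k+1$ and the divisibility of the $b$-function of $\text{gr}^j_{V_{\text{ind}}}$ by $b_{f,\fx}(-\partial_tt+j-1-\alpha)$, to exactly the bookkeeping carried out in \cite{BD24}, and they rely throughout on the ``at most one step too low'' feature secured by $\rho_{f,\fx}\subseteq(-\alpha-2,-\alpha)$.
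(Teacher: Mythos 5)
Your proposal is correct and follows essentially the same route as the paper: take the right-hand side as a candidate good $V$-filtration, show that the eigenvalues of $\partial_tt$ on its $k$-th graded piece lie in $[k,k+1)$ (which is where the hypothesis $\rho_{f,\hspace{0.7pt}\fx}\subseteq(-\alpha-2,-\alpha)$ and Lemma \ref{lemBfcn} enter), and conclude by uniqueness of the $V$-filtration. The only cosmetic difference is that you verify the eigenvalue location via the generalised-eigenspace decomposition of $\mathrm{gr}^k_{V_{\text{ind}}}$ and the image/cokernel exact sequence for $\mathrm{gr}^k_W$, whereas the paper exhibits the explicit B-function $\overline{\beta}_{f,-\alpha,\hspace{0.7pt}\fx}(s+\alpha)=\beta_{f,-\alpha,\hspace{0.7pt}\fx}(s+\alpha-1)\widetilde{\beta}_{f,-\alpha,\hspace{0.7pt}\fx}(s+\alpha)$ by a direct computation on the two summands; the two verifications amount to the same bookkeeping.
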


\begin{proof}

Firstly, see that $\beta_{f,-\alpha,\hspace{0.7pt}\fx}(s)|b_{f,\hspace{0.7pt}\fx}(-s-1)$, so we may write 
\[b_{f,\hspace{0.7pt}\fx}(-s-1)=\beta_{f,-\alpha,\hspace{0.7pt}\fx}(s) \widetilde{\beta}_{f,-\alpha,\hspace{0.7pt}\fx}(s),\]
where the roots of $\widetilde{\beta}_{f,-\alpha,\hspace{0.7pt}\fx}(s)$ lie in the interval $[\alpha,\alpha+1)$. Now define 
\[\overline{V}^ki_{f,+}\scM(f^{-\alpha})_{(\fx,0)}:= V_{\text{ind}}^{k+1}i_{f,+}\scM(f^{-\alpha})_{(\fx,0)}+\beta_{f,-\alpha,\hspace{0.7pt}\fx}(\partial_tt-k+\alpha)V_{\text{ind}}^ki_{f,+}\scM(f^{-\alpha})_{(\fx,0)}\]
for each $k \in \mathbb{Z}$. Write also
\[\overline{\beta}_{f,-\alpha,\hspace{0.7pt}\fx}(s) := \beta_{f,-\alpha,\hspace{0.7pt}\fx}(s-1)\widetilde{\beta}_{f,-\alpha,\hspace{0.7pt}\fx}(s).\]
See that $\overline{V}^{\bullet}$ is a good $V$-filtration. We show that $\overline{\beta}_{f,-\alpha,\hspace{0.7pt}\fx}(s+\alpha)$ is a B-function for the good $V$-filtration $\overline{V}^{\bullet}$, via the following calculations.\vspace{-10pt}

\begin{align*} 
\overline{\beta}_{f,-\alpha,\hspace{0.7pt}\fx}(\partial_tt-k+\alpha)\cdot V^{k+1}_{\text{ind}} &= \widetilde{\beta}_{f,-\alpha,\hspace{0.7pt}\fx}(\partial_tt-k+\alpha)\beta_{f,-\alpha,\hspace{0.7pt}\fx}(\partial_tt-(k+1)+\alpha)\cdot V^{k+1}_{\text{ind}} \\
&\subseteq \widetilde{\beta}_{f,-\alpha,\hspace{0.7pt}\fx}(\partial_tt-k)\cdot \overline{V}^{k+1} \;\;\; \text{by defn of } \overline{V}^{k+1}\\ 
&\subseteq \overline{V}^{k+1}\\
\end{align*}

\vspace{-20pt}\noindent and 
\begin{align*} 
\overline{\beta}_{f,-\alpha,\hspace{0.7pt}\fx}(\partial_tt-k+\alpha)\cdot [ \beta_{f,-\alpha,\hspace{0.7pt}\fx}(\partial_tt&-k+\alpha)\cdot V^k_{\text{ind}}] \\&= \beta_{f,-\alpha,\hspace{0.7pt}\fx}(\partial_tt-k+\alpha-1)\cdot \left[b_{f,\hspace{0.7pt}\fx}(-(\partial_tt-k+\alpha)-1)\cdot V^k_{\text{ind}}\right] \\ 
&\subseteq \beta_{f,-\alpha,\hspace{0.7pt}\fx}(\partial_tt-(k+1)+\alpha)\cdot V^{k+1}_{\text{ind}} \;\;\;\; (*) \\ 
&\subseteq \overline{V}^{k+1} \;\;\;\; \text{as above,}
\end{align*}

\noindent where ($*$) follows since $b_{f,\hspace{0.7pt}\fx}(-s-1-\alpha)$ is the Bernstein-Sato polynomial for the good $V$-filtration $V_{\text{ind}}^{\bullet}$, by Lemma \ref{lemBfcn}. 

These two inclusions imply that 
\[\overline{\beta}_{f,-\alpha,\hspace{0.7pt}\fx}(\partial_tt-k+\alpha)\cdot \overline{V}^k \subseteq \overline{V}^{k+1}\] 
for every $k \in \mathbb{Z}$, i.e., that $\overline{\beta}$ is a B-function for the good $V$-filtration $\overline{V}^{\bullet}$. But, the roots of $\overline{\beta}$ are contained within the half-open interval $[\alpha, \alpha+1)$. This implies further that the Bernstein-Sato polynomial of $\overline{V}^{\bullet}$ has its roots in the interval $[0,1)$. Therefore, by the uniqueness of such a good $V$-filtration (see \cite{MM04}, Proposition 4.2-6), $\overline{V}^k = V^k$ for every $k \in \mathbb{Z}$, as required.
    
\end{proof}

\begin{cor}

If $\rho_{f,\hspace{0.7pt}\fx}\subseteq(-\alpha-2,-\alpha)$, then, $\forall k \in \bZ_{\geq 0}$, 
\[V^{k+\alpha}i_{f,+}\scO_X(*f)_{(\fx,0)} = V^{k+1}_{\text{\emph{ind}}}i_{f,+}\scO_X(*f)_{(\fx,0)} + \beta_{f,-\alpha,\hspace{0.7pt}\fx}(\partial_tt-k)V^k_{\text{\emph{ind}}}i_{f,+}\scO_X(*f)_{(\fx,0)},\]
where $V_{\text{\emph{ind}}}^ki_{f,+}\scO_X(*f)_{(\fx,0)}:=V^k\scD_{X\times\bC,(\fx,0)}\cdot f^{-1}_{\fx}$ (which is non-exhaustive if $-2\in\rho_{f,\hspace{0.7pt}\fx}$).

\label{corVfil2}
    
\end{cor}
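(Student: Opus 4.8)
The plan is to deduce Corollary~\ref{corVfil2} from Proposition~\ref{propVfil} by transporting the identity through the isomorphism $\Phi$ of Proposition~\ref{propiso}. First I would recall that $\Phi$ is an isomorphism of $\scD_X\langle t,t^{-1},s\rangle$-modules from $i_{f,+}\scM(f^{-\alpha})$ to $i_{f,+}\scO_X(*f)$, under which $s$ acts as $s+\alpha$ on the target, and that $\Phi(V^{\gamma}i_{f,+}\scM(f^{-\alpha})) = V^{\gamma+\alpha}i_{f,+}\scO_X(*f)$. So applying $\Phi$ to the equality in Proposition~\ref{propVfil} turns the left-hand side $V^k i_{f,+}\scM(f^{-\alpha})_{(\fx,0)}$ into $V^{k+\alpha}i_{f,+}\scO_X(*f)_{(\fx,0)}$, which is exactly the left-hand side of the Corollary.

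Next I would track what happens to the two summands on the right. The key point is that $\Phi$ sends the generator $f_{\fx}^{-1-\alpha}$ of $i_{f,+}\scM(f^{-\alpha})_{(\fx,0)}$ to the generator $f_{\fx}^{-1}$ of $i_{f,+}\scO_X(*f)_{(\fx,0)}$ (this is immediate from the defining formula for $\Phi$ in Proposition~\ref{propiso} with $k=0$, $h_0 = 1$, $g_0 = 1$), so $\Phi$ carries $V_{\text{ind}}^{j}i_{f,+}\scM(f^{-\alpha})_{(\fx,0)} = V^j\scD_{X\times\bC,(\fx,0)}\cdot f_{\fx}^{-1-\alpha}$ onto $V^j\scD_{X\times\bC,(\fx,0)}\cdot f_{\fx}^{-1} = V_{\text{ind}}^j i_{f,+}\scO_X(*f)_{(\fx,0)}$, because $\Phi$ is $\scD_{X\times\bC}$-linear and $V^j\scD_{X\times\bC,(\fx,0)}$ is generated as a left $V^0\scD$-module (in fact by operators in $t,\partial_t,\scD_X$) that $\Phi$ respects. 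For $k\geq 0$ we only ever use $V_{\text{ind}}^{k+1}$ and $V_{\text{ind}}^k$ with nonnegative indices, matching the ``$k\in\bZ_{\geq 0}$'' hypothesis in the Corollary. Finally, the operator $\beta_{f,-\alpha,\hspace{0.7pt}\fx}(\partial_tt-k+\alpha)$ in Proposition~\ref{propVfil}: since $\partial_tt = -s$ as an operator and $\Phi$ intertwines $s$ on the source with $s+\alpha$ (equivalently $\partial_t t$ with $\partial_t t - \alpha$) on the target, applying $\Phi$ replaces $\partial_tt - k+\alpha$ by $(\partial_tt - \alpha) - k + \alpha = \partial_t t - k$. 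Hence the second summand $\beta_{f,-\alpha,\hspace{0.7pt}\fx}(\partial_tt-k+\alpha)\cdot V^k_{\text{ind}}i_{f,+}\scM(f^{-\alpha})_{(\fx,0)}$ maps exactly to $\beta_{f,-\alpha,\hspace{0.7pt}\fx}(\partial_tt-k)\cdot V^k_{\text{ind}}i_{f,+}\scO_X(*f)_{(\fx,0)}$, giving precisely the stated formula.

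The only remaining loose end is the parenthetical remark that $V_{\text{ind}}^{\bullet}i_{f,+}\scO_X(*f)_{(\fx,0)}$ is non-exhaustive when $-2\in\rho_{f,\hspace{0.7pt}\fx}$; I would note this corresponds, under $\Phi$, to the failure of $i_{f,+}\scO_X(*f)_{(\fx,0)}$ to be generated by $f_{\fx}^{-1}$ in that case, which by Lemma~\ref{lemgen} (with $\alpha=0$, $k=1$) happens exactly when $-2\in\rho_{f,\hspace{0.7pt}\fx}$, i.e.\ $\rho_{f,\hspace{0.7pt}\fx}\cap\bZ_{<-1}\neq\emptyset$; but this non-exhaustiveness does not affect the claimed identity, which is purely an equality of subspaces for each fixed $k\geq 0$.

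I expect the main obstacle to be purely bookkeeping: making sure the index shifts by $\alpha$ are applied consistently in all three places (the exponent of $f_{\fx}$, the superscript on $V$, and the argument of $\beta$), and confirming that $\Phi$ really does carry $V^j\scD_{X\times\bC}\cdot f_{\fx}^{-1-\alpha}$ onto $V^j\scD_{X\times\bC}\cdot f_{\fx}^{-1}$ rather than onto some shifted version — this follows from $\scD_{X\times\bC}$-linearity of $\Phi$ together with $\Phi(f_{\fx}^{-1-\alpha}) = f_{\fx}^{-1}$, but it is worth spelling out since $\Phi$ does twist the $s$-action. There is no deep new input: everything is a formal consequence of Proposition~\ref{propiso} and Proposition~\ref{propVfil}.
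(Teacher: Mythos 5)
Your argument is correct and is essentially the paper's own (one-line) proof: transport the identity of Proposition \ref{propVfil} through the isomorphism $\Phi$ of Proposition \ref{propiso}, using $\Phi(V^{k})=V^{k+\alpha}$, $t$-linearity, the twisted $s$-linearity to turn $\partial_tt-k+\alpha$ into $\partial_tt-k$, and $\Phi(f_{\fx}^{-1-\alpha})=f_{\fx}^{-1}$ to identify the induced $V$-filtrations. One small imprecision: $\Phi$ is not $\partial_t$-linear (hence not literally $\scD_{X\times\bC}$-linear — exactly because of the $s\mapsto s+\alpha$ twist), but since $V^j\scD_{X\times\bC,(\fx,0)}$ for $j\geq 0$ is generated by $\scD_X$, $t$ and $\partial_tt$, and $\Phi$ conjugates $\partial_tt$ to $\partial_tt-\alpha$, your conclusion $\Phi\bigl(V^j\scD_{X\times\bC,(\fx,0)}\cdot f_{\fx}^{-1-\alpha}\bigr)=V^j\scD_{X\times\bC,(\fx,0)}\cdot f_{\fx}^{-1}$ still holds for $j\geq 0$.
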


\begin{proof}

This follows immediately by Proposition \ref{propiso}, since the isomorphism $\Phi$ defined in this proposition is $s$-linear and $t$-linear, and since 
\[\Phi(V^ki_{f,+}\scM(f^{-\alpha})) = V^{k+\alpha}i_{f,+}\scO_X(*f).\]
    
\end{proof}

As in the case $\alpha=0$, the $V$-filtration gives us an expression for the Hodge filtration on $\scM(f^{-\alpha})$. First we define some linear maps we will use in the statement of this theorem.

\begin{deflem}

Consider the following maps.
\begin{align*}
\varphi_1:i_{f,+}\scM(f^{-\alpha})\to \scM(f^{-\alpha})[s]f^s\,\, &; \,\, uf^{-\alpha}\partial_t^i \mapsto uf^{-i-\alpha}Q_i(-s)f^s,\\
\varphi_2:i_{f,+}\scO_X(*f)\to \scO_X(*f)[s]f^s\,\, &; \,\, u\partial_t^i \mapsto uf^{-i}Q_i(-s)f^s,\\
\Phi':\scM(f^{-\alpha})[s]f^s\to\scO_X(*f)[s]f^s\,\, &; \,\, g(s)f^{-\alpha}f^s\mapsto g(s+\alpha)^if^s,\\
\phi_{-\alpha}:\scO_X(*f)[s]f^s\to \scM(f^{-\alpha})\,\, &; \,\, g(s)f^s \mapsto g(-\alpha)f^{-\alpha},\\
\phi_0:\scM(f^{-\alpha})[s]f^s\to \scM(f^{-\alpha})\,\, &; \,\, g(s)f^{s-\alpha} \mapsto g(0)f^{-\alpha},\\
\pi_{f,-\alpha}:\scD_X[s]\to i_{f,+}\scM(f^{-\alpha})\,\, &; \,\, P(s)\mapsto P(s)\cdot f^{-1-\alpha}.\\
\end{align*}

\vspace{-10pt}\noindent Then $\varphi_1$, $\varphi_2$ and $\Phi'$ are all isomorphisms of $\scD_X\langle t,t^{-1},s\rangle$-modules\footnote{Where $s$ acts as $s+\alpha$ on the right hand side of the map $\Phi'$, as with $\Phi$.}, and $\Phi = \varphi_2^{-1}\circ\Phi'\circ\varphi_1$. Moreover, $\pi_{f,-\alpha}$ has kernel $\text{\emph{ann}}_{\scD_X[s]}f^{s-1-\alpha}$ and when $\rho_{f,\hspace{0.7pt}\fx}\cap(\bZ_{<-1}-\alpha)=\emptyset$, $\pi_{f,-\alpha}$ has image $V^0_{\text{\emph{ind}}}i_{f,+}\scM(f^{-\alpha})_{(\fx,0)}$ at $\fx$. Note that we will use the same notation for the induced maps between localisations.

\label{deflem}

\end{deflem}

\begin{proof}

See \cite{MP20}. For the final statement, see that $\pi_{f,-\alpha}$ is the composition of the map $\scD_X[s]\to\scD_X[s]\cdot f^{s-1-\alpha}\subseteq \scM(f^{-\alpha})[s]f^s$ with the isomorphism $\varphi_1^{-1}$.
    
\end{proof}

\begin{note}

We are abusing notation here, in that we already defined $\phi_{-\alpha}$ in the introduction as the specialisation map $\scD_X[s]\to\scD_X$. We will continue this abuse of notation through the remainder of the article and hope no confusion is caused.
    
\end{note}

The following is simply a rewording of a result in \cite{MP20}. 

\begin{thm}

For each non-negative integer $k$ and $\alpha \in \bQ$,
\begin{center} \vspace{-0.4cm} \begin{align*}F_k^H\scM(f^{-\alpha}) & = \left(\partial_tF_{k-1}^{t-\text{\emph{ord}}}i_{f,+}\scM(f^{-\alpha})+V^0i_{f,+}\scM(f^{-\alpha})\right)\cap\scM(f^{-\alpha}) \\ &= \psi_0\left(F_k^{t-\text{\emph{ord}}}V^0i_{f,+}\scM(f^{-\alpha})\right)\\
&=\psi_{-\alpha}\left(F_k^{t-\text{\emph{ord}}}V^{\alpha}i_{f,+}\scO_X(*f)\right),\end{align*}\end{center}
where $\psi_0 := \phi_0\circ \varphi_1$ and $\psi_{-\alpha}:=\phi_{-\alpha}\circ \varphi_2$. Here, \vspace{-5pt}
\[F_k^{t-\text{\emph{ord}}}i_{f,+}\scM(f^{\beta}):=\sum_{i=0}^k\scM(f^{\beta})\partial_t^i\vspace{-5pt}\]
for any $\beta \in \bQ$.

\label{thmformulaQ}

\end{thm}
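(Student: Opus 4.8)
The plan is to isolate the first displayed equality as the one real input — it is the statement of \cite{MP20} transcribed into the present notation — and to obtain the remaining two by formal manipulation of the isomorphisms recorded in Definition/Lemma~\ref{deflem}. For the first equality I would argue as follows. By construction $F_\bullet^H\scM(f^{-\alpha})$ is the filtration induced by the complex mixed Hodge module structure overlying $\scM(f^{-\alpha})$, and Saito's description of the Hodge filtration of a twisted localisation along a graph embedding — which is precisely the relevant statement of \cite{MP20}, in the $\bQ$-divisor generality — reads exactly
\[F_k^H\scM(f^{-\alpha}) = \bigl(\partial_t F_{k-1}^{t\text{-ord}}i_{f,+}\scM(f^{-\alpha}) + V^0 i_{f,+}\scM(f^{-\alpha})\bigr)\cap\scM(f^{-\alpha}),\]
once one regards $\scM(f^{-\alpha})$ as the $\partial_t^0$-summand of $i_{f,+}\scM(f^{-\alpha}) = \bigoplus_{i\ge 0}\scM(f^{-\alpha})\partial_t^i$ and sets $F_k^{t\text{-ord}} i_{f,+}\scM(f^{-\alpha}) = \sum_{i=0}^k\scM(f^{-\alpha})\partial_t^i$. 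I would simply cite this.

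For the second equality I would first make $\psi_0 = \phi_0\circ\varphi_1$ completely explicit. From $\varphi_1(uf^{-\alpha}\partial_t^i) = uf^{-i-\alpha}Q_i(-s)f^s$ and $Q_i(0) = \prod_{j=0}^{i-1}j$, which is $1$ for $i=0$ and $0$ for $i\ge 1$, one sees that $\psi_0$ is nothing but the projection of $i_{f,+}\scM(f^{-\alpha})$ onto its $\partial_t^0$-summand, identified with $\scM(f^{-\alpha})$. Granting this, the identity
\[\psi_0\bigl(F_k^{t\text{-ord}}V^0 i_{f,+}\scM(f^{-\alpha})\bigr) = \bigl(\partial_t F_{k-1}^{t\text{-ord}}i_{f,+}\scM(f^{-\alpha}) + V^0 i_{f,+}\scM(f^{-\alpha})\bigr)\cap\scM(f^{-\alpha})\]
is an elementary check on $\partial_t$-components: if $v = \sum_{i=0}^k v_i\partial_t^i\in V^0$ then $v_0 = v - \partial_t\bigl(\sum_{i=1}^k v_i\partial_t^{i-1}\bigr)$ exhibits $\psi_0(v) = v_0$ in the right-hand side; conversely if $m = \partial_t w + v$ lies in $\scM(f^{-\alpha})$ with $w$ of $t$-order $\le k-1$ and $v\in V^0$, then $v = m - \partial_t w$ has $t$-order $\le k$ and $\partial_t^0$-component $m$, so $m = \psi_0(v)$. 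This is word for word the argument used for $\alpha = 0$ in \cite{BD24}.

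For the third equality I would pass to $i_{f,+}\scO_X(*f)$ through $\Phi = \varphi_2^{-1}\circ\Phi'\circ\varphi_1$. Two facts about $\Phi$ are already available: directly from the defining relation in Proposition~\ref{propiso}, which for inputs of $t$-order $\le k$ produces outputs of $t$-order $\le k$ and is invertible as such (since $\{f^{-i}Q_i(-s)\}_{i\le k}$ and $\{f^{-i}Q_i(-s-\alpha)\}_{i\le k}$ are both $\scO_X(*f)$-bases of the degree-$\le k$ polynomials), $\Phi$ carries $F_k^{t\text{-ord}}i_{f,+}\scM(f^{-\alpha})$ bijectively onto $F_k^{t\text{-ord}}i_{f,+}\scO_X(*f)$; and by Proposition~\ref{propiso} again, $\Phi\bigl(V^0 i_{f,+}\scM(f^{-\alpha})\bigr) = V^{\alpha}i_{f,+}\scO_X(*f)$. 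Being bijective, $\Phi$ preserves intersections, so $\Phi\bigl(F_k^{t\text{-ord}}V^0 i_{f,+}\scM(f^{-\alpha})\bigr) = F_k^{t\text{-ord}}V^{\alpha}i_{f,+}\scO_X(*f)$. Finally $\psi_{-\alpha}\circ\Phi = \phi_{-\alpha}\circ\varphi_2\circ\Phi = \phi_{-\alpha}\circ\Phi'\circ\varphi_1 = \phi_0\circ\varphi_1 = \psi_0$, where $\phi_{-\alpha}\circ\Phi' = \phi_0$ is immediate from the definitions — $\Phi'$ performs the substitution $s\mapsto s+\alpha$ in the polynomial coefficient, and evaluating at $s = -\alpha$ afterwards equals evaluating at $s = 0$ beforehand. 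Applying $\psi_{-\alpha}$ to the previous display and invoking the second equality gives $\psi_{-\alpha}\bigl(F_k^{t\text{-ord}}V^{\alpha}i_{f,+}\scO_X(*f)\bigr) = F_k^H\scM(f^{-\alpha})$, completing the chain.

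The only point requiring care — and the nearest thing to an obstacle — is the bookkeeping of three simultaneous shifts: the index shift by $\alpha$ in the $V$-filtration under $\Phi$, the substitution $s\mapsto s+\alpha$ built into $\Phi'$ (hence into $\Phi$), and the degree shift between $\partial_t F_{k-1}^{t\text{-ord}}$ and $F_k^{t\text{-ord}}$. Beyond keeping these straight and pinning down the correct statement of \cite{MP20}, there is no genuine difficulty: everything is formally identical to the case $\alpha = 0$ treated in \cite{BD24}.
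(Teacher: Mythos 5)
Your proposal is correct and follows essentially the same route as the paper: one of the three formulas is imported from \cite{MP20} and the other two are obtained from the elementary $\partial_t$-component computation of $\psi_0$ together with the relations $\psi_0=\psi_{-\alpha}\circ\Phi$ and $\Phi\bigl(F_k^{t\text{-ord}}V^0 i_{f,+}\scM(f^{-\alpha})\bigr)=F_k^{t\text{-ord}}V^{\alpha}i_{f,+}\scO_X(*f)$. The one discrepancy is your choice of citation anchor: Theorem A$'$ of \cite{MP20} is literally the \emph{third} formula, $F_k^H\scM(f^{-\alpha})=\psi_{-\alpha}\bigl(F_k^{t\text{-ord}}V^{\alpha}i_{f,+}\scO_X(*f)\bigr)$ with $\psi_{-\alpha}(v)=\sum_j Q_j(\alpha)f^{-j-\alpha}v_j$ (and it requires $\alpha>0$, the case $\alpha=0$ being covered by \cite{BD24}), not the first; since you prove all three expressions equivalent anyway, redirecting the citation to that form and reading your chain of equivalences in the opposite direction closes the argument at no cost.
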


\begin{proof}

The final two equalities are immediate, since $\psi_0=\psi_{-\alpha}\circ \Phi$ and since $\Phi$ preserves the $t$-order filtration. Thus it suffices to prove that 
\[F_k^H\scM(f^{-\alpha})=\psi_{-\alpha}\left(F_k^{t-\text{ord}}V^{\alpha}i_{f,+}\scO_X(*f)\right).\]
For $\alpha=0$ this was proven in \cite{BD24} (although it also follows from the $\alpha=1$ case, for instance). For $\alpha> 0$, this follows immediately by \cite{MP20}, Theorem A', since, if $v=\sum_{i=0}^kv_j\partial_t^j \in i_{f,+}\scO_X(*f)$, then $\psi_{-\alpha}(v)=\sum_{i=0}^kQ_j(\alpha)f^{-j-\alpha}v_j$.
    
\end{proof}

\section{The main theorem} \label{sectionmainthm}

In this section we combine the formulae for the Hodge filtration and for the $V$-filtration, under our hypotheses, in order to obtain the expression given in the main theorem in the introduction. We first assume that $\rho_{f,\hspace{0.7pt}\fx}\subseteq(-2-\alpha,-\alpha)$ and then later assume in addition that $f$ is parametrically prime and Euler homogeneous at $\fx\in X$. The proofs are completely analogous to those used in \cite{BD24}, which are in turn mostly analogous to those of \cite{CNS22}.

\begin{proof}[Proof of Theorem \ref{thmmain2}]

By Proposition \ref{propVfil}, it is easy to see that
\[\pi_{f,-\alpha}(\widetilde{\Gamma}_{f,-\alpha,\hspace{0.7pt}\fx})=V^0i_{f,+}\scM(f^{-\alpha})_{(\fx,0)},\]
where
\[\widetilde{\Gamma}_{f,-\alpha,\hspace{0.7pt}\fx}:=\scD_{X,\hspace{0.7pt}\fx}[s]f_{\fx}+\scD_{X,\hspace{0.7pt}\fx}[s]\beta_{f,-\alpha,\hspace{0.7pt}\fx}(-s+\alpha)+\text{ann}_{\scD_{X,\hspace{0.7pt}\fx}[s]}f_{\fx}^{s-1-\alpha}\subseteq\scD_{X,\hspace{0.7pt}\fx}[s].\]
So if $uf_{\fx}^{-\alpha}\in F_0^H\scM(f^{-\alpha})_{\fx}$, $u\in\scO_X(*f)_{\fx}$, we may by Theorem \ref{thmformulaQ} write $uf_{\fx}^{-\alpha}=\pi_{f,-\alpha}(\gamma)$ with $\gamma\in \widetilde{\Gamma}_{f,-\alpha,\hspace{0.7pt}\fx}$. Since $F_0^H \scM(f^{-\alpha})_{\fx} \subseteq \scO_{X,\hspace{0.7pt}\fx} \cdot f_{\fx}^{-1-\alpha}$ (by Theorem \ref{thmformulaQ} for instance), we know that $uf_{\fx} \in \scO_{X,\hspace{1pt}\fx}$ and that $\gamma - uf_{\fx} \in \scD_{X,\hspace{1pt}\fx}[s]$. Then $\pi_{f,-\alpha}(\gamma-uf_{\fx})=0$, implying that $uf_{\fx} \in \gamma+\ker\pi_{f,-\alpha} \subseteq \widetilde{\Gamma}_{f,-\alpha,\hspace{0.7pt}\fx}$, since $\ker\pi_{f,-\alpha}=\text{ann}_{\scD_{X,\hspace{1pt}\fx}[s]}f_{\fx}^{s-1-\alpha}\subseteq\widetilde{\Gamma}_{f,-\alpha,\hspace{0.7pt}\fx}$ as seen in the statement of Definition/Lemma \ref{deflem}. Thus $uf_{\fx}^{-\alpha} = (uf_{\fx})f_{\fx}^{-1-\alpha} \in (\widetilde{\Gamma}_{f,-\alpha,\hspace{0.7pt}\fx} \cap \scO_{X,\hspace{1pt}\fx}) f_{\fx}^{-1-\alpha} = (\Gamma_{f,-\alpha,\hspace{0.7pt}\fx} \cap \scO_{X,\hspace{1pt}\fx}) f_{\fx}^{-1-\alpha}$, proving that $F_0^H \scM(f^{-\alpha})_{\fx} \subseteq (\Gamma_{f,-\alpha,\hspace{0.7pt}\fx} \cap \scO_{X,\hspace{1pt}\fx}) f^{-1-\alpha}_{\fx}$ as required.

Conversely, take $u \in \Gamma_{f,-\alpha,\hspace{0.7pt}\fx} \cap \scO_{X,\hspace{1pt}\fx}=\widetilde{\Gamma}_{f,-\alpha,\hspace{0.7pt}\fx} \cap \scO_{X,\hspace{1pt}\fx}$. Then $u f^{-1-\alpha}_{\fx} = \pi_{f,-\alpha}(u) \in V^0i_{f,+}\scM(f^{-\alpha})_{(\fx,0)}$, implying that $u f^{-1-\alpha}_{\fx} \in F_0^H \scM(f^{-\alpha})_{\fx}$ by Theorem \ref{thmformulaQ}.

\end{proof}

\begin{prop}

Assume that $f$ is parametrically prime and Euler homogeneous at $\fx$ and that $\rho_{f,\hspace{0.7pt}\fx}\cap(\bZ_{<-1}-\alpha)=\emptyset$. Let $k,l\in\bZ_{\geq 0}$. Then
\[F_l^{\text{\emph{ord}}}i_{f,+}\scM(f^{-\alpha})_{(\fx,0)}\cap V^k_{\text{\emph{ind}}}i_{f,+}\scM(f^{-\alpha})_{(\fx,0)} = \left(F_l\scD_{X\times\bC,(\fx,0)}\cap V^k\scD_{X\times \bC,(\fx,0)}\right)\cdot f_{\fx}^{-1-\alpha}.\]

\label{propcomp}
    
\end{prop}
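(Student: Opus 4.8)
The inclusion $\supseteq$ is immediate: if $P \in F_l\scD_{X\times\bC,(\fx,0)} \cap V^k\scD_{X\times\bC,(\fx,0)}$, then $P \cdot f_{\fx}^{-1-\alpha}$ lies in both $F_l^{\text{ord}}i_{f,+}\scM(f^{-\alpha})_{(\fx,0)}$ and $V^k_{\text{ind}}i_{f,+}\scM(f^{-\alpha})_{(\fx,0)}$ by definition. So the content is the reverse inclusion, and the plan is to follow the proof of the corresponding statement in \cite{BD24} for $\alpha = 0$, transporting everything along the twist. The natural approach is to pass to the graded level: parametric primeness says $\text{gr}^{F^{\sharp}}(\text{ann}_{\scD_{X,\hspace{0.7pt}\fx}[s]}f_{\fx}^{s-1})$ is prime, and one wants to leverage this together with Euler homogeneity to control how $F^{\text{ord}}$ and $V_{\text{ind}}$ interact.

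First I would reduce to a statement purely about $\scD_{X,\hspace{0.7pt}\fx}[s]$ using the description of $i_{f,+}\scM(f^{-\alpha})_{(\fx,0)}$ from Definition/Lemma \ref{deflem}: via $\varphi_1$ and $\pi_{f,-\alpha}$, the module $i_{f,+}\scM(f^{-\alpha})_{(\fx,0)}$ with its $t$-order filtration and induced $V$-filtration corresponds to $\scD_{X,\hspace{0.7pt}\fx}[s]/\text{ann}_{\scD_{X,\hspace{0.7pt}\fx}[s]}f_{\fx}^{s-1-\alpha}$, where $-\partial_t t$ acts as $s$, the $t$-order filtration pulls back to (roughly) the filtration by $s$-degree combined with order, and $V^k_{\text{ind}}$ corresponds to the image of $\scD_{X,\hspace{0.7pt}\fx}[s]\cdot f_{\fx}^{s+k-1-\alpha}$ for $k \geq 0$. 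Note that $\text{ann}_{\scD_{X,\hspace{0.7pt}\fx}[s]}f_{\fx}^{s-1-\alpha}$ is related to $\text{ann}_{\scD_{X,\hspace{0.7pt}\fx}[s]}f_{\fx}^{s-1}$ by an $s \mapsto s+\alpha$ shift, so parametric primeness transfers directly: $\text{gr}^{F^{\sharp}}$ of this annihilator is still prime. The key geometric input is then that $V^k\scD_{X\times\bC,(\fx,0)}$ corresponds, on the graph side, to an operator involving powers of $t$ (equivalently, after passing through $\varphi_1$, to multiplication shifting the power of $f_{\fx}^{s+\bullet}$), and that the symbol of $t - f_{\fx}$, together with the symbol of the Euler operator $E + \partial_t t + 1 + \alpha$ from Lemma \ref{lemann}, must lie in the prime graded annihilator. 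The argument is: take $u \in F_l^{\text{ord}} \cap V^k_{\text{ind}}$, write $u = P\cdot f_{\fx}^{-1-\alpha}$ with $P$ of order $\leq l$, and also $u = Q \cdot f_{\fx}^{-1-\alpha}$ with $Q \in V^k\scD_{X\times\bC,(\fx,0)}$; then $P - Q$ annihilates $f_{\fx}^{-1-\alpha}$, so by Lemma \ref{lemann} it lies in the explicit left ideal generated by $\text{ann}_{\scD_{X,\hspace{0.7pt}\fx}}f_{\fx}^s$, $E + \partial_t t + 1 + \alpha$, and $t - f_{\fx}$. One then uses a symbol/leading-term argument — modifying $Q$ by elements of this annihilator ideal without increasing its order past $l$ and without leaving $V^k$ — to replace $Q$ by an operator of order $\leq l$ lying in $V^k$. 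Primeness of the graded annihilator is exactly what makes this symbol bookkeeping work: it guarantees that when a product of symbols lies in the graded ideal, one of the factors does, which is what lets one peel off $t$'s and Euler-operator factors one at a time.

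Concretely, I expect the induction to be on $l$ (or on $k$, or a double induction), mirroring the structure of Lemma \ref{lemann}'s proof: reduce the $\partial_t$-degree or the $t$-power step by step, each time using that a symbol of the form $(\text{symbol of }t - f_{\fx})\cdot(\text{something})$ lying in the prime graded annihilator forces the ``something'' to lie there too (since the symbol of $t - f_{\fx}$, namely the symbol of $t$, is not in the annihilator — as $t$ acts invertibly on $\scO_X(*f)f^{-\alpha}$ away from... actually one must check it is genuinely a non-zero-divisor modulo the graded ideal, which is where primeness bites). Euler homogeneity is used both to invoke Lemma \ref{lemann} and to ensure the relevant Euler symbol behaves well.

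\textbf{Main obstacle.} The hard part will be the symbol-level manipulation: verifying that the leading symbols of $t - f_{\fx}$ and of the Euler operator are non-zero-divisors modulo $\text{gr}^{F^{\sharp}}(\text{ann}_{\scD_{X,\hspace{0.7pt}\fx}[s]}f_{\fx}^{s-1-\alpha})$ — which is where parametric primeness is essential — and then running the induction so that each reduction step stays inside \emph{both} $F_l$ and $V^k$ simultaneously. Keeping the two filtrations controlled at once, rather than one at a time, is the delicate bookkeeping, and it is exactly the point where the $\alpha = 0$ argument of \cite{BD24} (itself modeled on \cite{CNS22}) must be followed carefully with the $s \mapsto s + \alpha$ shift threaded through consistently.
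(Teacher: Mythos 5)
Your proposal is correct and takes essentially the same route as the paper: the reduction to a symbol-level statement via the presentation of $\text{ann}_{\scD_{X\times\bC,(\fx,0)}}f_{\fx}^{-1-\alpha}$ from Lemma \ref{lemann}, with parametric primeness supplying the non-zero-divisor property needed for the bifiltered symbol decomposition. Your hand-rolled descending induction on the order of $Q$ is exactly what the paper packages by citing Saito's Lemma 1.2.14 of \cite{MSai88}, and the simultaneous $F$/$V$ bookkeeping you flag as the main obstacle is precisely what the paper outsources to the proof of Proposition 4.21 of \cite{BD24}.
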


\begin{proof}

Exactly as in \cite{CNS22} and \cite{BD24}, using a lemma of Saito (Lemma 1.2.14 of \cite{MSai88}), and Lemma \ref{lemgen}, we reduce to showing that the natural map
\[F_l\scD_{X\times\bC,(\fx,0)}\cap V^k\scD_{X\times\bC,(\fx,0)}\cap I_{\alpha}(f_{\fx})\to \sigma_l(V^k\scD_{X\times\bC,(\fx,0)})\cap\sigma_l(I_{\alpha}(f_{\fx}))\]
is surjective, where $\sigma_l$ is the principal symbol map of order $l$ and where 
\[I_{\alpha}(f_{\fx}):=\scD_{X\times\bC,(\fx,0)}(t-f_{\fx})+\scD_{X\times\bC,(\fx,0)}(E+\partial_tt+1+\alpha)+\text{ann}_{\scD_{X,\hspace{0.7pt}\fx}}f_{\fx}^s\subseteq\scD_{X\times\bC,(\fx,0)},\]
coinciding by Lemma \ref{lemann} with $\text{ann}_{\scD_{X\times\bC,(\fx,0)}}f_{\fx}^{-1-\alpha}$, so that
\[i_{f,+}\scM(f^{-\alpha})_{(\fx,0)}\simeq \scD_{X\times\bC,(\fx,0)}/I_{\alpha}(f_{\fx}).\]

Choose firstly $\zeta_1,\ldots,\zeta_m$ to be a Gröbner basis of $\text{ann}_{\scD_{X,\hspace{0.7pt}\fx}}f_{\fx}^s$.

Now, given $P\in I_{\alpha}(f_{\fx})$ of order $l$ such that $\sigma(P)\in\sigma(V^k\scD_{X\times\bC,(\fx,0)})$, it is shown (under the assumption that $f$ is parametrically prime and Euler homogeneous at $\fx$) in the proof of \cite{BD24}, Proposition 4.21, that we may write
\[\sigma(P)=\sum_{i=0}^{m+1}k_i\sigma(G_i),\]
where $G_0 = t-f_{\fx}$, $G_i = \zeta_i$ for $1\leq i \leq m$ and $G_{m+1}=E+\partial_tt+1+\alpha$, and where 
\[k_i \in V^k\text{gr}^F_{l-\text{ord}(G_i)}\scD_{X\times\bC,(\fx,0)}\]
for all $i$.

Then choose lifts $K_i\in F_{l-\text{ord}(G_i)}\scD_{X\times\bC,(\fx,0)}\cap V^k\scD_{X\times\bC,(\fx,0)}$ of $k_i$, and write
\[P' := \sum_{i=0}^{m+1}K_iG_i \in I_{\alpha}(f_{\fx}).\]
See that $P' \in F_l\scD_{X\times \bC,(\fx,0)}\cap V^k\scD_{X\times \bC,(\fx,0)} \cap I_{\alpha}(f_{\fx})$ and that $\sigma(P)=\sigma(P')$, thus showing the required surjectivity.

\end{proof}

\begin{lem}

Assume that $f$ is parametrically prime and Euler homogeneous at $\fx\in X$ and that $\rho_{f,\hspace{0.7pt}\fx}\subseteq (-2-\alpha,-\alpha)$. Then
\[F_{k+1}^HV^0i_{f,+}\scM(f^{-\alpha})_{(\fx,0)}=F_k^{t-\text{\emph{ord}}}V^0i_{f,+}\scM(f^{-\alpha})_{(\fx,0)}= F_k^{\text{\emph{ord}}}V^0i_{f,+}\scM(f^{-\alpha})_{(\fx,0)}.\]

\label{lemincl2}
    
\end{lem}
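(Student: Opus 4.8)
The plan is to establish the two equalities in turn, proving a chain of inclusions. The middle term $F_k^{t\text{-ord}}V^0i_{f,+}\scM(f^{-\alpha})_{(\fx,0)}$ by definition consists of elements of $V^0$ of the form $\sum_{i=0}^k v_i\partial_t^i$; the right-hand term $F_k^{\text{ord}}V^0i_{f,+}\scM(f^{-\alpha})_{(\fx,0)}$ is $F_k\scD_{X\times\bC,(\fx,0)}\cdot f_{\fx}^{-1-\alpha}$ intersected with $V^0$, equivalently (since $F_k\scD_{X\times\bC}\cdot f_{\fx}^{-1-\alpha}$ lies in $F_k^{t\text{-ord}}$) it is contained in the middle term. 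So the inclusion $F_k^{\text{ord}}\subseteq F_k^{t\text{-ord}}$ within $V^0$ is immediate, and the content is the reverse. Similarly $F_{k+1}^HV^0\supseteq$ both terms should follow from Theorem \ref{thmformulaQ} (which exhibits $F_{k+1}^H\scM(f^{-\alpha})$ as $\psi_0$ applied to $F_{k+1}^{t\text{-ord}}V^0$, and one checks $F_k^{t\text{-ord}}V^0$ maps into $F_{k+1}^HV^0$ using that $\partial_t$ raises $t$-order by one and the lift back up); the real work is the inclusion $F_k^{t\text{-ord}}V^0\subseteq F_{k+1}^HV^0$ together with $F_k^{t\text{-ord}}V^0\subseteq F_k^{\text{ord}}V^0$.

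First I would reduce everything to the compatibility statement Proposition \ref{propcomp}. Take $w=\sum_{i=0}^k v_i\partial_t^i\in V^0i_{f,+}\scM(f^{-\alpha})_{(\fx,0)}$. Using the generation result Lemma \ref{lemgen} (applicable since $\rho_{f,\fx}\subseteq(-2-\alpha,-\alpha)$ forces $\rho_{f,\fx}\cap(\bZ_{<-1}-\alpha)=\emptyset$) and the identification $i_{f,+}\scM(f^{-\alpha})_{(\fx,0)}\simeq\scD_{X\times\bC,(\fx,0)}/I_\alpha(f_{\fx})$, lift $w$ to $P\in\scD_{X\times\bC,(\fx,0)}$ with $P\cdot f_{\fx}^{-1-\alpha}=w$. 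The hypothesis $w\in F_k^{t\text{-ord}}$ means $P$ can be taken with $\partial_t$-degree $\le k$; but I want to control the \emph{full} order of $P$. Here is where Proposition \ref{propcomp} enters: since $w\in F_k^{t\text{-ord}}i_{f,+}\scM(f^{-\alpha})_{(\fx,0)}\cap V^0_{\text{ind}}$... — but note $V^0$ and $V^0_{\text{ind}}$ need not coincide, so I must first pass through Proposition \ref{propVfil}, which writes $V^0 = V^1_{\text{ind}} + \beta_{f,-\alpha,\fx}(\partial_tt+\alpha)V^0_{\text{ind}}$. So decompose $w = w' + \beta_{f,-\alpha,\fx}(\partial_tt+\alpha)w''$ with $w'\in V^1_{\text{ind}}$, $w''\in V^0_{\text{ind}}$, and handle each summand. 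The operator $\beta_{f,-\alpha,\fx}(\partial_tt+\alpha)$ has $t$-order zero (it is a polynomial in $\partial_tt$), so it preserves the $t$-order filtration, and it has some fixed order $d=\deg\beta$ as a differential operator; thus I can arrange $w',w''$ to again have bounded $t$-order and apply Proposition \ref{propcomp} to the modules generated inside $V^k_{\text{ind}}$ and $V^{k+1}_{\text{ind}}$.

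The key mechanism is then: inside each $V^j_{\text{ind}}$ (which by definition is $V^j\scD_{X\times\bC,(\fx,0)}\cdot f_{\fx}^{-1-\alpha}$), an element of $t$-order $\le k$ is, by Proposition \ref{propcomp} with $l$ large enough and then an inductive/limiting argument on $l$, actually obtainable from an operator lying in $F_k\scD_{X\times\bC,(\fx,0)}\cap V^j\scD_{X\times\bC,(\fx,0)}$ — the point being that $F_k\scD_{X\times\bC}\cap V^j\scD_{X\times\bC}$ is spanned by monomials $\partial_x^a t^b\partial_t^c$ with $|a|+b+c\le k$ and $c-b\le\text{(the }V\text{-index shift)}$, and such a monomial acting on $f_{\fx}^{-1-\alpha}$ lands in $F_k^{t\text{-ord}}$ automatically once $c\le k$; conversely the compatibility says the $t$-order-$k$ piece is realized without raising the total order beyond $k$. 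This gives $F_k^{t\text{-ord}}V^0\subseteq F_k^{\text{ord}}V^0$. For the equality with $F_{k+1}^HV^0$, combine with Theorem \ref{thmformulaQ} and the first (easy) chain of inclusions. I expect the main obstacle to be bookkeeping the interaction between the $\beta_{f,-\alpha,\fx}(\partial_tt+\alpha)$-correction term from Proposition \ref{propVfil} and the order/$t$-order filtrations — specifically, verifying that applying this operator and then invoking Proposition \ref{propcomp} does not cause an uncontrolled jump in total differential order, and carefully matching the $V$-indices ($0$ versus $1$ versus the shifts $-k$, $+\alpha$) through the isomorphism $\varphi_1$; the rest is the same routing as in \cite{CNS22} and \cite{BD24}.
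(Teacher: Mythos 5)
There is a genuine gap in the central step, the inclusion $F_k^{t-\text{ord}}V^0\subseteq F_k^{\text{ord}}V^0$. Your plan rests on the claim that Proposition \ref{propcomp}, ``with $l$ large enough and then an inductive/limiting argument on $l$,'' converts control of the $\partial_t$-degree of a representing operator into control of its total order. But Proposition \ref{propcomp} compares $F^{\text{ord}}_l$ with $V^j_{\text{ind}}$ only; it says nothing about the $t$-order filtration, and the assertion that ``the compatibility says the $t$-order-$k$ piece is realized without raising the total order beyond $k$'' is not a consequence of it. The descending induction on $l$ you allude to would need precisely the statement you are trying to prove. The mechanism that actually bridges the two filtrations is multiplication by a high power of $t$: for $u=\sum_{j=0}^k u_j\partial_t^j\in F_k^{t-\text{ord}}V^0$ one chooses $p\gg 0$ so that each coefficient of $t^p\cdot u$ lies in $\scO_{X,\hspace{0.7pt}\fx}\cdot f_{\fx}^{-1-\alpha}=F_0^{\text{ord}}\scM(f^{-\alpha})_{\fx}$, whence $t^p\cdot u\in F_k^{\text{ord}}V^p$; only then does Proposition \ref{propcomp} (at level $V^p_{\text{ind}}\supseteq V^p$) apply, producing $P\in F_k\scD_{X\times\bC,(\fx,0)}\cap V^p\scD_{X\times\bC,(\fx,0)}$ with $P\cdot f_{\fx}^{-1-\alpha}=t^p\cdot u$; one concludes by factoring $P=t^pP'$ and using injectivity of $t$ on $i_{f,+}\scM(f^{-\alpha})$. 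Relatedly, your proposed decomposition $w=w'+\beta_{f,-\alpha,\hspace{0.7pt}\fx}(\partial_tt+\alpha)w''$ from Proposition \ref{propVfil} is both unnecessary (only the resulting inclusion $V^0\subseteq V^0_{\text{ind}}$ is needed) and problematic: that decomposition is a sum of submodules and there is no reason it can be chosen compatibly with the $t$-order filtration, so ``arranging $w',w''$ to again have bounded $t$-order'' is not justified.

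The first equality is also essentially unargued in your plan. The nontrivial direction $F_k^{t-\text{ord}}V^0\subseteq F_{k+1}^HV^0$ does not follow from ``$\partial_t$ raises $t$-order by one''; it requires the description $F_{k+1}^Hi_{f,+}\scM(f^{-\alpha})_{(\fx,0)}=\sum_{j}F_{k-j}^H\scM(f^{-\alpha})_{\fx}\partial_t^j$ together with an induction on $k$ in which one applies $(f_{\fx}-t)$ to $u=\sum_j u_j\partial_t^j$ to drop the $t$-order by one (yielding $\sum_{j\geq 1}ju_j\partial_t^{j-1}$), invokes the inductive hypothesis to place each $u_j$, $j\geq 1$, in $F_{k-j}^H\scM(f^{-\alpha})_{\fx}$, and uses $u_0=\psi_0(u)\in F_k^H\scM(f^{-\alpha})_{\fx}$ from Theorem \ref{thmformulaQ} for the constant term. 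Without these two mechanisms (the $(f_{\fx}-t)$ induction and the $t^p$ trick) the plan does not yield a proof.
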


\begin{proof}

As in the case $\alpha=0$, the first equality actually holds in full generality. We prove this via induction on $k$. The inclusion $\subseteq$ is trivial. Given 
\[u=\sum_{j=0}^ku_j\partial_t^j \in V^0i_{f,+}\scM(f^{-\alpha})_{(\fx,0)}\]
(with $u_j\in\scM(f^{-\alpha})_{\fx}$ for all $j$), $u_0 = \psi_0(u)\in F_k^H\scM(f^{-\alpha})_{\fx}$ by Theorem \ref{thmformulaQ}. If \underline{$k=0$}, then $u=u_0$, so 
\[u \in F_0^H\scM(f^{-\alpha})_{\fx} = F_1^Hi_{f,+}\scM(f^{-\alpha})_{(\fx,0)},\]
showing the desired result. If \underline{$k>0$}, then 
\[\sum_{j=1}^kju_j\partial_t^{j-1}=(f_{\fx}-t)\cdot u \in F_{k-1}^{t-\text{ord}}V^0i_{f,+}\scM(f^{-\alpha})_{(\fx,0)},\]
implying by the inductive hypothesis that
\[\sum_{j=1}^kju_j\partial_t^{j-1} \in F_k^Hi_{f,+}\scM(f^{-\alpha})_{(\fx,0)},\]
which in turn implies that $u_j \in F_{k-j}^H\scM(f^{-\alpha})_{\fx}$ for all $j \geq 1$. Since we already know that $u_0 \in F_k^H\scM(f^{-\alpha})_{\fx}$, 
\[u=\sum_{j=0}^ku_j\partial_t^j \in \sum_{j=0}^kF_{k-j}^H\scM(f^{-\alpha})_{\fx}\partial_t^j = F_{k+1}^Hi_{f,+}\scM(f^{-\alpha})_{(\fx,0)},\]
proving the required inclusion.

Now we prove the second equality. The inclusion $\supseteq$ is clear. If $u \in F_k^{t-\text{ord}}V^0i_{f,+}\scM(f^{-\alpha})_{(\fx,0)}$, then, as in the proof of \cite{BD24}, Proposition 4.22, there exists some $p\geq 0$ such that $t^p \cdot u \in F_k^{\text{ord}}V^pi_{f,+}\scM(f^{-\alpha})_{(\fx,0)}$.

Now Proposition \ref{propVfil} implies that $V^pi_{f,+}\scM(f^{-\alpha})_{(\fx,0)} \subseteq V^p_{\text{ind}}i_{f,+}\scM(f^{-\alpha})_{(\fx,0)}$, so by Proposition \ref{propcomp} there exists some $P\in F_k\scD_{X\times\bC,(\fx,0)}\cap V^p\scD_{X\times \bC,(\fx,0)}$ such that $P\cdot f_{\fx}^{-1-\alpha} = t^p\cdot u$. 

Finally, we may by the definition of $V^{\bullet}\scD_{X\times \bC,(\fx,0)}$ write $P=t^p\cdot P'$, where $P'\in F_k\scD_{X\times\bC,(\fx,0)}\cap V^0\scD_{X\times \bC,(\fx,0)}$. Then, by the injectivity of $t$, 
\[u= P'\cdot f_{\fx}^{-1-\alpha} \in F_k^{\text{ord}}i_{f,+}\scM(f^{-\alpha})_{(\fx,0)}.\]

\end{proof}

\begin{thm}

Assume that $f$ is parametrically prime and Euler homogeneous at $\fx\in X$ and that $\rho_{f,\hspace{0.7pt}\fx}\subseteq (-2-\alpha,-\alpha)$. Then
\[F_k^H\scM(f^{-\alpha})_{\fx}\subseteq F_k^{\text{\emph{ord}}}\scM(f^{-\alpha})_{\fx}.\]

\label{thmincl}
    
\end{thm}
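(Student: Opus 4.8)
The plan is to deduce Theorem \ref{thmincl} by concatenating the two results already established under exactly these hypotheses. By Theorem \ref{thmformulaQ}, at the germ level
\[F_k^H\scM(f^{-\alpha})_{\fx}=\psi_0\bigl(F_k^{t-\text{ord}}V^0i_{f,+}\scM(f^{-\alpha})_{(\fx,0)}\bigr),\]
and by Lemma \ref{lemincl2} the argument on the right equals $F_k^{\text{ord}}i_{f,+}\scM(f^{-\alpha})_{(\fx,0)}\cap V^0i_{f,+}\scM(f^{-\alpha})_{(\fx,0)}$, hence is contained in $F_k^{\text{ord}}i_{f,+}\scM(f^{-\alpha})_{(\fx,0)}=F_k\scD_{X\times\bC,(\fx,0)}\cdot f_{\fx}^{-1-\alpha}$. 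So it remains only to prove the purely formal inclusion
\[\psi_0\bigl(F_k\scD_{X\times\bC,(\fx,0)}\cdot f_{\fx}^{-1-\alpha}\bigr)\subseteq F_k\scD_{X,\hspace{0.7pt}\fx}\cdot f_{\fx}^{-1-\alpha}=F_k^{\text{ord}}\scM(f^{-\alpha})_{\fx},\]
and this last step uses none of the standing assumptions.

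For that I would first record that $\psi_0=\phi_0\circ\varphi_1$ is nothing but the map extracting the $\partial_t^0$-component of an element of $i_{f,+}\scM(f^{-\alpha})_{(\fx,0)}\cong\bigoplus_{i\geq0}\scM(f^{-\alpha})_{\fx}\partial_t^i$: since $\varphi_1(uf^{-\alpha}\partial_t^i)=uf^{-i-\alpha}Q_i(-s)f^s$ and $\phi_0$ then sets $s=0$, while $Q_i(0)=\prod_{j=0}^{i-1}j$ equals $1$ for $i=0$ and $0$ for $i\geq1$. Thus the task is to check that, for every $P\in F_k\scD_{X\times\bC,(\fx,0)}$, the $\partial_t^0$-component of $P\cdot f_{\fx}^{-1-\alpha}$ lies in $F_k\scD_{X,\hspace{0.7pt}\fx}\cdot f_{\fx}^{-1-\alpha}$.

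This is an elementary computation inside the $\scD_{X\times\bC,(\fx,0)}$-module $i_{f,+}\scM(f^{-\alpha})_{(\fx,0)}$, carried out on generators. Moving all the copies of $t$ in $P$ to the left and all the copies of $\partial_t$ to the right, it suffices to treat $P=t^{a}Q\partial_t^{b}$ with $Q\in\scD_{X,\hspace{0.7pt}\fx}$ and $\text{ord}(Q)+b\leq k$. Here one uses the three basic facts about the action on $i_{f,+}\scM(f^{-\alpha})_{(\fx,0)}$: $\partial_t$ raises the $\partial_t$-degree by $1$; an operator from $\scD_{X,\hspace{0.7pt}\fx}$ raises the $\partial_t$-degree by at most its order, its degree-preserving part being its ordinary action on $\scM(f^{-\alpha})_{\fx}$; and multiplication by $t$ acts as multiplication by $f_{\fx}$ (because $(t-f_{\fx})\cdot f_{\fx}^{-1-\alpha}=0$) together with a term that lowers the $\partial_t$-degree by $1$. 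Chasing degrees, the $\partial_t^0$-component of $t^{a}Q\partial_t^{b}\cdot f_{\fx}^{-1-\alpha}$ is a sum of terms $c\,f_{\fx}^{\,a-b-r}\cdot Q'\cdot f_{\fx}^{-1-\alpha}$ with $c\in\bC$, $0\leq r\leq\text{ord}(Q)$, $a-b-r\geq 0$, and $Q'\in\scD_{X,\hspace{0.7pt}\fx}$ of order $\leq\text{ord}(Q)-r\leq k$; since $f_{\fx}^{\,a-b-r}\in\scO_{X,\hspace{0.7pt}\fx}$, each such term lies in $F_k\scD_{X,\hspace{0.7pt}\fx}\cdot f_{\fx}^{-1-\alpha}$. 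Summing over generators yields the inclusion. (That the coefficients of $P$ are power series in $t$ rather than polynomials is harmless: it only turns the finite $t$-sums into convergent $t$-Taylor expansions evaluated at $t=f_{\fx}$.)

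There is no serious obstacle left: the genuine work — and the only place parametric primality and Euler homogeneity enter — is already absorbed into Lemma \ref{lemincl2}, through Proposition \ref{propcomp}. The single point requiring care in the remaining step is that operators from $\scD_{X,\hspace{0.7pt}\fx}$ can only raise, never lower, the $\partial_t$-degree on $i_{f,+}\scM(f^{-\alpha})_{(\fx,0)}$; it is exactly this one-sidedness that forces the surviving $\partial_t^0$-component to be produced by an operator of order $\leq k$ applied to $f_{\fx}^{-1-\alpha}$. The whole argument is parallel to the one for $\alpha=0$ in \cite{BD24}.
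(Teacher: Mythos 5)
Your proposal is correct and follows essentially the same route as the paper: reduce via Theorem \ref{thmformulaQ} and Lemma \ref{lemincl2} to an element of $F_k^{\text{ord}}V^0i_{f,+}\scM(f^{-\alpha})_{(\fx,0)}$, then observe that $\psi_0$ extracts the $\partial_t^0$-component and that this component of $P\cdot f_{\fx}^{-1-\alpha}$, for $P\in F_k\scD_{X\times\bC,(\fx,0)}$, lies in $F_k\scD_{X,\hspace{0.7pt}\fx}\cdot f_{\fx}^{-1-\alpha}$. You simply spell out in more detail the normalisation step (absorbing the powers of $t$ via $t\cdot f_{\fx}^{-1-\alpha}=f_{\fx}\cdot f_{\fx}^{-1-\alpha}$ and the one-sidedness of the $\partial_t$-degree) that the paper performs implicitly by writing the lift as $\sum_j P_{k-j}\partial_t^j\cdot f_{\fx}^{-1-\alpha}$.
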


\begin{proof}

Let $u_0 \in F_k^H \scM(f^{-\alpha})_{\fx}$. By Lemma \ref{lemincl2} and Theorem \ref{thmformulaQ}, we find $u = u_0  + v \in F_{k+1}^{\text{ord}} V^0 i_{f,+} \scM(f^{-\alpha})_{(\fx,0)}$, where $v = \sum_{j \geq 1} v_j \partial_t^j$ for $v_j \in \scM(f^{-\alpha})_{\fx}$. Write $u = P \cdot f_{\fx}^{-1-\alpha}$ where $P = \sum_{j\geq 0} P_{k-j} \partial_t^j$ and $P_{k-j} \in F_{k-j} \scD_{X,\hspace{0.7pt}\fx}$. Then
\[u_0 = P_k\cdot f_{\fx}^{-1-\alpha} \in F_k^{\text{ord}}\scM(f^{-\alpha})_{\fx}.\]
    
\end{proof}

\begin{lem}

Assume that $f$ is parametrically prime and Euler homogeneous at $\fx\in X$ and that $\rho_{f,\hspace{0.7pt}\fx}\subseteq (-2-\alpha,-\alpha)$. Then
\[\pi_{f,-\alpha}\left(\widetilde{\Gamma}_{f,-\alpha,\hspace{0.7pt}\fx}\cap F_k^{\sharp}\scD_{X,\hspace{0.7pt}\fx}[s]\right)=F_k^{\text{\emph{ord}}}V^0i_{f,+}\scM(f^{-\alpha})_{(\fx,0)}\]
for all $k \geq 0$, where
\[\widetilde{\Gamma}_{f,-\alpha,\hspace{0.7pt}\fx}:=\scD_{X,\hspace{0.7pt}\fx}[s]f_{\fx}+\scD_{X,\hspace{0.7pt}\fx}[s]\beta_{f,-\alpha,\hspace{0.7pt}\fx}(-s+\alpha)+\text{\emph{ann}}_{\scD_{X,\hspace{0.7pt}\fx}[s]}f_{\fx}^{s-1-\alpha}\subseteq\scD_{X,\hspace{0.7pt}\fx}[s].\]

\end{lem}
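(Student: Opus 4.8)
The strategy is to prove the two inclusions separately, exploiting the fact (from the proof of Theorem~\ref{thmmain2}) that $\pi_{f,-\alpha}(\widetilde{\Gamma}_{f,-\alpha,\hspace{0.7pt}\fx})=V^0i_{f,+}\scM(f^{-\alpha})_{(\fx,0)}$ and combining this with the compatibility statement of Lemma~\ref{lemincl2}, which identifies $F_k^{\text{ord}}V^0i_{f,+}\scM(f^{-\alpha})_{(\fx,0)}$ with $F_k^{t-\text{ord}}V^0i_{f,+}\scM(f^{-\alpha})_{(\fx,0)}$. The inclusion $\subseteq$ will be the easy one: given $P(s)\in\widetilde{\Gamma}_{f,-\alpha,\hspace{0.7pt}\fx}\cap F_k^{\sharp}\scD_{X,\hspace{0.7pt}\fx}[s]$, we have $\pi_{f,-\alpha}(P(s))\in V^0i_{f,+}\scM(f^{-\alpha})_{(\fx,0)}$ automatically, and one checks that the total order filtration $F_k^{\sharp}$ on $\scD_{X,\hspace{0.7pt}\fx}[s]$ maps under $\pi_{f,-\alpha}$ into the order filtration $F_k^{\text{ord}}$ on $i_{f,+}\scM(f^{-\alpha})_{(\fx,0)}$ — here one uses that $s$ acts on $i_{f,+}\scM(f^{-\alpha})_{(\fx,0)}$ as $-\partial_t t$, which lies in $F_1\scD_{X\times\bC,(\fx,0)}$, so that $F_k\scD_{X,\hspace{0.7pt}\fx}s^i\cdot f_{\fx}^{-1-\alpha}\subseteq F_{k+i}^{\text{ord}}i_{f,+}\scM(f^{-\alpha})_{(\fx,0)}$; thus $F_k^{\sharp}\scD_{X,\hspace{0.7pt}\fx}[s]=\sum_i F_{k-i}\scD_{X,\hspace{0.7pt}\fx}s^i$ lands in $F_k^{\text{ord}}$.

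For the reverse inclusion $\supseteq$, take $w\in F_k^{\text{ord}}V^0i_{f,+}\scM(f^{-\alpha})_{(\fx,0)}$, so $w=R\cdot f_{\fx}^{-1-\alpha}$ for some $R\in F_k\scD_{X\times\bC,(\fx,0)}$ and $w\in V^0$. Using the decomposition $\scD_{X\times\bC,(\fx,0)}=\bigoplus_i\scD_{X,\hspace{0.7pt}\fx}\partial_t^i\oplus\bigoplus_{i>0}\scD_{X,\hspace{0.7pt}\fx}t^i$ modulo $I_{\alpha}(f_{\fx})$, or more directly the isomorphism $i_{f,+}\scM(f^{-\alpha})_{(\fx,0)}\simeq\sum_{i\geq 0}\scM(f^{-\alpha})_{\fx}\partial_t^i$ together with Lemma~\ref{lemann}, one rewrites $R\cdot f_{\fx}^{-1-\alpha}=\sum_{j=0}^k R_{k-j}\partial_t^j\cdot f_{\fx}^{-1-\alpha}$ with $R_{k-j}\in F_{k-j}\scD_{X,\hspace{0.7pt}\fx}$ (absorbing the $t$-part using $t=f_{\fx}$ and the relation $E+\partial_t t+1+\alpha$, which does not increase total order). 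Now lift: since $\partial_t t$ corresponds to $-s$ under $\varphi_1$, the element $\partial_t^j$ acting on $f_{\fx}^{-1-\alpha}$ is expressible via $\pi_{f,-\alpha}$ applied to a polynomial in $s$ of degree $j$ with coefficients in $\scD_{X,\hspace{0.7pt}\fx}$ of order $0$ (concretely one uses the formula $\partial_t^j\cdot f_{\fx}^{-1-\alpha}=\pi_{f,-\alpha}(\text{something involving } Q_j)$, tracking that $R_{k-j}\partial_t^j\cdot f_{\fx}^{-1-\alpha}=\pi_{f,-\alpha}(\widetilde R_{k-j}(s))$ with $\widetilde R_{k-j}(s)\in F_{k-j}\scD_{X,\hspace{0.7pt}\fx}\cdot \bC[s]_{\leq j}\subseteq F_k^{\sharp}\scD_{X,\hspace{0.7pt}\fx}[s]$). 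Summing, $w=\pi_{f,-\alpha}(\widetilde R(s))$ with $\widetilde R(s)\in F_k^{\sharp}\scD_{X,\hspace{0.7pt}\fx}[s]$; it remains to correct $\widetilde R(s)$ by an element of $\ker\pi_{f,-\alpha}=\text{ann}_{\scD_{X,\hspace{0.7pt}\fx}[s]}f_{\fx}^{s-1-\alpha}\subseteq\widetilde{\Gamma}_{f,-\alpha,\hspace{0.7pt}\fx}$ so that it lies in $\widetilde{\Gamma}_{f,-\alpha,\hspace{0.7pt}\fx}$ — but since $w\in V^0=\pi_{f,-\alpha}(\widetilde{\Gamma}_{f,-\alpha,\hspace{0.7pt}\fx})$, there is \emph{already} some $\gamma\in\widetilde{\Gamma}_{f,-\alpha,\hspace{0.7pt}\fx}$ with $\pi_{f,-\alpha}(\gamma)=w$, so $\widetilde R(s)-\gamma\in\ker\pi_{f,-\alpha}\subseteq\widetilde{\Gamma}_{f,-\alpha,\hspace{0.7pt}\fx}$, whence $\widetilde R(s)\in\widetilde{\Gamma}_{f,-\alpha,\hspace{0.7pt}\fx}\cap F_k^{\sharp}\scD_{X,\hspace{0.7pt}\fx}[s]$, as needed.

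The main obstacle is the bookkeeping in the reverse inclusion: one must verify that the rewriting of an order-$k$ operator $R\in F_k\scD_{X\times\bC,(\fx,0)}$ applied to $f_{\fx}^{-1-\alpha}$ into the normal form $\sum_j R_{k-j}\partial_t^j\cdot f_{\fx}^{-1-\alpha}$ genuinely respects the \emph{total} order filtration — i.e.\ that replacing $t$ by $f_{\fx}$ and using the Euler relation $E+\partial_t t+1+\alpha\in I_{\alpha}(f_{\fx})$ to push everything to the $\partial_t$-side does not inflate orders — and then that the resulting lift to $\scD_{X,\hspace{0.7pt}\fx}[s]$ via $\pi_{f,-\alpha}$ stays in $F_k^{\sharp}$. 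This is precisely the analogue of the argument in \cite{BD24}, Proposition~4.22 (and Lemma~\ref{lemincl2} above), so I would invoke that structure: the key inputs are that $s\mapsto -\partial_t t$ has total order $1$, that $F^{\sharp}$ is designed exactly so that $\pi_{f,-\alpha}$ is filtered of degree $0$, and that Proposition~\ref{propcomp} (which already does the hard compatibility work between $F$ and $V$) lets us realise any element of $F_k^{\text{ord}}V^0$ by an operator of order $k$ that is also in $V^0\scD_{X\times\bC,(\fx,0)}$.
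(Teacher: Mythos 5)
Your proposal follows essentially the same route as the paper: the inclusion $\subseteq$ comes from $\pi_{f,-\alpha}(\widetilde{\Gamma}_{f,-\alpha,\hspace{0.7pt}\fx})=V^0i_{f,+}\scM(f^{-\alpha})_{(\fx,0)}$ together with the filteredness of $\pi_{f,-\alpha}$, and the inclusion $\supseteq$ from producing a preimage in $F_k^{\sharp}\scD_{X,\hspace{0.7pt}\fx}[s]$ via Proposition \ref{propcomp} and then correcting by $\ker\pi_{f,-\alpha}=\text{ann}_{\scD_{X,\hspace{0.7pt}\fx}[s]}f_{\fx}^{s-1-\alpha}\subseteq\widetilde{\Gamma}_{f,-\alpha,\hspace{0.7pt}\fx}$ --- exactly the paper's argument. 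One caveat: the intermediate ``normal form'' step, where you assert that each term $R_{k-j}\partial_t^j\cdot f_{\fx}^{-1-\alpha}$ individually equals $\pi_{f,-\alpha}(\widetilde R_{k-j}(s))$, does not work as stated: for $j\geq 1$ the element $f_{\fx}^{-1-\alpha}\partial_t^j$ corresponds under $\varphi_1$ to $Q_j(-s)f_{\fx}^{-j}\cdot f_{\fx}^{s-1-\alpha}$, which in general does not lie in $\scD_{X,\hspace{0.7pt}\fx}[s]\cdot f_{\fx}^{s-1-\alpha}=\varphi_1(\text{im}\,\pi_{f,-\alpha})$. This detour is unnecessary, though, and your final paragraph already contains the correct replacement: Proposition \ref{propcomp} (together with $V^0\subseteq V^0_{\text{ind}}$ from Proposition \ref{propVfil}) hands you an operator $P\in F_k\scD_{X\times\bC,(\fx,0)}\cap V^0\scD_{X\times\bC,(\fx,0)}$ with $P\cdot f_{\fx}^{-1-\alpha}=u$, and substituting $t\mapsto f_{\fx}$ and $\partial_tt\mapsto -s$ in such a $P$ lands directly in $F_k^{\sharp}\scD_{X,\hspace{0.7pt}\fx}[s]$, after which the kernel correction finishes the argument as you describe.
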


\begin{proof}

Firstly, as seen above, it is clear that 
\[\pi_{f,-\alpha}\left(\widetilde{\Gamma}_{f,-\alpha,\hspace{0.7pt}\fx}\right) = V^0i_{f,+}\scM(f^{-\alpha}),\]
by Proposition \ref{propVfil}. It is also clear by the linearity of $\pi_{f,-\alpha}$ that $\pi_{f,-\alpha}(F_k^{\sharp}\scD_{X,\hspace{0.7pt}\fx}[s])\subseteq F_k^{\text{ord}}i_{f,+}\scM(f^{-\alpha})_{(\fx,0)}$. Therefore it suffices to prove that
\[F_k^{\text{ord}}V^0i_{f,+}\scM(f^{-\alpha})_{(\fx,0)}\subseteq \pi_{f,-\alpha}\left(\widetilde{\Gamma}_{f,-\alpha,\hspace{0.7pt}\fx}\cap F_k^{\sharp}\scD_{X,\hspace{0.7pt}\fx}[s]\right).\]

Assume that $u\in F_k^{\text{ord}}V^0i_{f,+}\scM(f^{-\alpha})_{(\fx,0)}$. Then we may write $u=\pi_{f,-\alpha}(Q)$ for some $Q\in\widetilde{\Gamma}_{f,-\alpha,\hspace{0.7pt}\fx}$. Also, since $V^0i_{f,+}\scM(f^{-\alpha})_{(\fx,0)}\subseteq V^0_{\text{ind}}i_{f,+}\scM(f^{-\alpha})_{(\fx,0)}$ by Proposition \ref{propVfil}, and since the order filtration and induced $V$-filtration on $i_{f,+}\scM(f^{-\alpha})_{(\fx,0)}$ are compatible in the sense of Proposition \ref{propcomp}, we may also write $u=\pi_{f,-\alpha}(\widetilde{P})$ for some $\widetilde{P}\in F_k^{\sharp}\scD_{X,\hspace{0.7pt}\fx}[s]$.

Then see that 
\[\widetilde{P}-Q\in\ker\pi_{f,-\alpha}=\text{ann}_{\scD_{X,\hspace{0.7pt}\fx}[s]}f_{\fx}^{s-1-\alpha}\subseteq \widetilde{\Gamma}_{f,-\alpha,\hspace{0.7pt}\fx},\]
so that in fact $\widetilde{P}\in \widetilde{\Gamma}_{f,-\alpha,\hspace{0.7pt}\fx}$ also. This shows the required inclusion.

\end{proof}

\begin{proof}[Proof of Theorem \ref{thmmain}]

The main theorem now follows by combining what we know about the filtrations on $V^0i_{f,+}\scM(f^{-\alpha})_{(\fx,0)}$.

\begin{align*}
F_k^H\scM(f^{-\alpha})_{\fx} &= \psi_0\left(F_k^{t-\text{ord}}V^0i_{f,+}\scM(f^{-\alpha})_{(\fx,0)}\right) \\
&= \psi_0\left(F_k^{\text{ord}}V^0i_{f,+}\scM(f^{-\alpha})_{(\fx,0)}\right)\\
&= \psi_0\left(\pi_{f,-\alpha}\left(\widetilde{\Gamma}_{f,-\alpha,\hspace{0.7pt}\fx}\cap F_k^{\sharp}\scD_{X,\hspace{0.7pt}\fx}[s]\right)\right)\\
&= \psi_0\left(\varphi_1^{-1}\left((\widetilde{\Gamma}_{f,-\alpha,\hspace{0.7pt}\fx}\cap F_k^{\sharp}\scD_{X,\hspace{0.7pt}\fx}[s])\cdot f_{\fx}^{s-1-\alpha}\right)\right)\\
&= \phi_0\left((\widetilde{\Gamma}_{f,-\alpha,\hspace{0.7pt}\fx}\cap F_k^{\sharp}\scD_{X,\hspace{0.7pt}\fx}[s])\cdot f_{\fx}^{s-1-\alpha}\right)\\
&= \phi_0\,(\widetilde{\Gamma}_{f,-\alpha,\hspace{0.7pt}\fx}\cap F_k^{\sharp}\scD_{X,\hspace{0.7pt}\fx}[s])\cdot f_{\fx}^{-1-\alpha}\\
&= \phi_{-\alpha}\,(\Gamma_{f,-\alpha,\hspace{0.7pt}\fx}\cap F_k^{\sharp}\scD_{X,\hspace{0.7pt}\fx}[s])\cdot f_{\fx}^{-1-\alpha}.\\
\end{align*}

\end{proof}

\bibliographystyle{siam}
\bibliography{bibliography.bib}

\begin{thebibliography}{1}

\bibitem{BD24}
{\sc D.~Bath and H.~Dakin}, {\em The {H}odge filtration and parametrically prime divisors}, 2024.
\newblock Preprint, arXiv:2408.02601.

\bibitem{CNS22}
{\sc A.~Castaño~Domínguez, L.~Narváez~Macarro, and C.~Sevenheck}, {\em Hodge ideals of free divisors}, Selecta Mathematica, 28 (2022).

\bibitem{Kash76}
{\sc M.~Kashiwara}, {\em B-functions and holonomic systems}, Invent. Math., 38 (1976), pp.~33--53.

\bibitem{MM04}
{\sc P.~Maisonobe and Z.~Mebkhout}, {\em Le théorème de comparaison pour les cycles évanescents}, Eléments de la théorie des systèmes différentiels géométriques, Sémin. Congr., 8 (2004), pp.~311--389.

\bibitem{MP20}
{\sc M.~Musta{\c t}{\u a} and M.~Popa}, {\em Hodge ideals for {Q}-divisors, {V}-filtration, and minimal exponent}, For. Math. Sigma, 8 (2020).

\bibitem{Nar15}
{\sc L.~Narváez~Macarro}, {\em A duality approach to the symmetry of {B}ernstein–{S}ato polynomials of free divisors}, Advances in Mathematics, 281 (2015), p.~1242–1273.

\bibitem{MSai88}
{\sc M.~Saito}, {\em Modules de {H}odge polarisables}, Publ. Res. Inst. Math. Sci., 24 (1988), pp.~849--995.

\bibitem{Wal17}
{\sc U.~Walther}, {\em The {J}acobian module, the {M}ilnor fiber, and the {D}-module generated by $f^s$}, Invent. Math., 207 (2017), pp.~1239--1287.

\end{thebibliography}

\end{document}